\newtheorem{theorem}{Theorem}[section]
\newtheorem{corollary}[theorem]{Corollary}
\newtheorem{lemma}[theorem]{Lemma}
\newtheorem{conjecture}[theorem]{Conjecture}
\newtheorem{proposition}[theorem]{Proposition}
\newtheorem{problem}[theorem]{Problem}
\numberwithin{equation}{section}
\newcommand{\N}{\mathbb N}
\newcommand{\Z}{\mathbb Z}
\newcommand{\R}{\mathbb R}
\newcommand{\Q}{\mathbb Q}
\newcommand{\F}{\mathbb F}
\title{New developments on graph sum index}
\author{Dheer Noal Desai}
\address[]{Department of Mathematical Sciences, University of Memphis, Memphis, TN 38152, USA}
\email{dndesai@memphis.edu}
\author{Runze Wang}
\address[]{Department of Mathematical Sciences, University of Memphis, Memphis, TN 38152, USA}
\email{runze.w@hotmail.com}
\thanks{}
\date{\today}
\subjclass[2020]{Primary: 05C78; Secondary: 05C35, 11B13}
\begin{document}

\sloppy

\begin{abstract}
In a graph, we assign distinct integers to the vertices, and take the sum of two integers if they are on two adjacent vertices. The minimum possible number of different sums is the \emph{sum index} of this graph. In this paper, we present some new developments on graph sum index. First, we explain the connections between graph sum index and results in additive combinatorics. Then, we determine the sum indices of the complete multipartite graphs, hypercubes, and some cluster graphs. Also, we study the maximum number of edges in a graph with a fixed sum index, which is related to the forbidden subgraph problem.

\end{abstract}
\keywords{Graph labeling; Sum index; Sumset}

\maketitle

\section{Introduction}
Every graph in this paper is assumed to be a finite simple graph.

In a graph, if we assign distinct integers to the vertices, and take the sum of two integers if they are on a pair of adjacent vertices, then at least how many different sums do we get? The minimum possible number of different sums is defined to be the \emph{sum index} of this graph.

Formally, for a graph $G=(V,\ E)$, we can use an injective map $f:V\lhook\joinrel\longrightarrow \Z$ to assign distinct integers to the vertices. For a chosen $f$, a vertex $u\in V$, and an edge $vw\in E$, we call $f(u)$ the \emph{rank} of $u$, and call $f(v)+f(w)$ the \emph{rank sum} of $vw$. Let $\sigma(G,\ f):=\{f(v)+f(w):\ vw\in E\}$ be the set of different rank sums in $G$ under $f$. Then, the sum index of $G$, denoted by $S(G)$, is defined by
\begin{align*}
S(G)=\min_{f:V\lhook\joinrel\rightarrow \Z}\{|\sigma(G,\ f)|\}.
\end{align*}

The concept of sum index was initially proposed by Harrington and Wong in \cite{HW}, and has been studied by multiple scholars recently. In \cite{HW}, Harrington and Wong determined the sum indices of the complete graphs, complete bipartite graphs, and caterpillars. In \cite{HHKRW}, Harrington et al. proved two upper bounds on sum index, and determined the sum indices of cycles, spiders, wheels, and grids. They also worked on the structure of the graphs with a fixed sum index, which will be further studied in this paper. In \cite{Has}, Haslegrave determined the sum indices of prisms, and gave two general lower bounds on sum index, one of which will be stated and used in this paper. In \cite{ZW}, Zhang and Wang determined the sum indices of the necklace graphs and the complements of matchings, cycles and paths.

However, the intriguing connections between sum index and additive combinatorics have not been mentioned in any of the existing papers.

In fact, there are some well-known connections between graph theory and additive combinatorics. For example, we have the graph theoretic proofs of Roth's theorem \cite{Zh} and the Balog-Szemer\'edi-Gowers theorem \cite{BS,Go,TV,Zh}. Here, we show two applications of additive combinatorics in graph sum index.

Harrington and Wong \cite{HW} determined the sum indices of the complete bipartite graphs, with a purely graph theoretic proof.

\begin{theorem}[Harrington and Wong \cite{HW}]\label{bipartite}
    Let $K_{m,\ n}$ be the complete bipartite graph with $m$ and $n$ vertices in each part. Then
    \begin{align*}
        S(K_{m,\ n})=m+n-1.
    \end{align*}
\end{theorem}

In fact, we can directly draw this conclusion using Kneser's theorem \cite{Gr,Kn1,Kn2}, which was proved in 1950s. For an additive abelian group $A$ and nonempty subsets $X,\ Y\subseteq A$, the \emph{sumset} of $X$ and $Y$ is defined to be $X+Y=\{x+y:\ x\in X,\ y\in Y\}$, and the \emph{stabilizer} of $X$ is defined to be $\mathsf{H}(X)=\{a\in A:\ \{a\}+X=X\}$. There are multiple ways to present Kneser's theorem, one of which is as follows.
\begin{theorem}[Kneser \cite{Kn1,Kn2}]
    Let $A$ be an additive abelian group and let $X,\ Y\subseteq A$ be finite nonempty sets. Then
    \begin{align*}
        |X+Y|\ge |X+H|+|Y+H|-|H|,
    \end{align*}
where $H:=\mathsf{H}(X+Y)$.
\end{theorem}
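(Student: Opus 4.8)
The plan is to prove Kneser's theorem by induction using the Dyson ($e$-)transform, the device underlying essentially all proofs of this result. For $e\in A$, transform the pair $(X,Y)$ into
\[
X(e)=X\cup(Y+e),\qquad Y(e)=Y\cap(X-e).
\]
Two elementary facts drive the argument. First, $X(e)+Y(e)\subseteq X+Y$, so the transform never enlarges the sumset; checking this amounts to writing an arbitrary element of $X(e)+Y(e)$ as (element of $X$) $+$ (element of $Y$). Second, since $Y(e)+e=(Y+e)\cap X$, inclusion--exclusion gives $|X(e)|+|Y(e)|=|X\cup(Y+e)|+|X\cap(Y+e)|=|X|+|Y|$, so the transform preserves the total size. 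Finally, whenever $e\in X-Y$ one has $Y(e)\neq\emptyset$ and $Y(e)\subseteq Y$. I would run the induction on $|Y|$, using each transform to try to shrink $Y$ without growing the sumset.

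The base case $|Y|=1$ is immediate: if $Y=\{y\}$ then $X+Y$ is a translate of $X$, so $H=\mathsf H(X)$, hence $X+H=X$ while $Y+H=y+H$ has size $|H|$, and both sides equal $|X|$. The bridge I would use throughout is the coset count coming from $H$-periodicity of $X+Y$: writing $\overline X,\overline Y$ for the images of $X,Y$ in the quotient $A/H$, the sumset $X+Y$ is a union of $|\overline X+\overline Y|$ cosets of $H$, so $|X+Y|=|H|\cdot|\overline X+\overline Y|$, and likewise $|X+H|=|H|\cdot|\overline X|$ and $|Y+H|=|H|\cdot|\overline Y|$. A short check shows $\mathsf H(\overline X+\overline Y)$ is trivial in $A/H$. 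Consequently the whole inequality is equivalent to the clean core statement $|\overline X+\overline Y|\ge|\overline X|+|\overline Y|-1$ whenever the stabilizer of the sumset is trivial, and this coset dictionary is what converts sumset sizes into the numbers appearing on the right-hand side.

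For the inductive step with $|Y|\ge 2$, after translating so that $0\in X\cap Y$, I would argue by a dichotomy. If some admissible $e\in X-Y$ gives a strictly smaller $Y(e)\subsetneq Y$, I apply the induction hypothesis to $(X(e),Y(e))$ and pull the resulting bound back to $(X,Y)$ using $X(e)+Y(e)\subseteq X+Y$ together with the size identity $|X(e)|+|Y(e)|=|X|+|Y|$. If instead no transform makes progress --- that is, $Y\subseteq X-e$ for every relevant $e$ --- then unwinding this condition forces $X$ and $Y$ to be periodic under a common nontrivial subgroup $K$ with $K\le H$; passing to the quotient $A/K$ then strictly reduces $|Y|$, so the coset dictionary together with the induction hypothesis in $A/K$ closes this case.

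The delicate point, and the step I expect to be the main obstacle, is that the Dyson transform does not preserve the stabilizer: $\mathsf H(X(e)+Y(e))$ need not equal $H$, so the ``pull-back'' in the first branch above is exactly where the difficulty hides. The technical heart of Kneser's theorem is the bookkeeping that controls how the stabilizer may change across a transform, so that at each stage one can either make genuine inductive progress on $|Y|$ or certify that the configuration is already periodic and extremal. Phrasing the induction through a minimal counterexample (minimizing $|X|+|Y|$, then $|Y|$) is the cleanest way I see to make this airtight: in such a counterexample every transform with $e\in X-Y$ must be trivial, which is precisely the periodicity alternative, and that is what forces the correction term $|H|$ into the bound in place of the naive $|X|+|Y|-1$.
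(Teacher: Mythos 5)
The paper itself gives no proof of this statement---it is quoted as a classical result with citations \cite{Kn1,Kn2}---so your proposal can only be measured against the standard proofs it is clearly modeled on. Your setup is correct as far as it goes: the two Dyson-transform identities $X(e)+Y(e)\subseteq X+Y$ and $|X(e)|+|Y(e)|=|X|+|Y|$, the nonemptiness of $Y(e)$ for $e\in X-Y$, the base case $|Y|=1$, and the coset dictionary reducing the theorem to the trivial-stabilizer statement in $A/H$ are all right. The branch where no transform shrinks $Y$ also genuinely closes: $Y(e)=Y$ for all $e\in X-Y$ unwinds to $X+(Y-Y)\subseteq X$, hence $X+K=X$ for $K=\langle Y-Y\rangle$, so $X+Y$ is a single translate of $X$ and one checks directly that $|X+Y|=|X|=|X+H|+|Y+H|-|H|$; you do not even need the quotient induction there.

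The genuine gap is the other branch, and it is exactly the one you flagged and then waved away. When some $e$ gives $Y(e)\subsetneq Y$, the induction hypothesis yields
\begin{align*}
|X+Y|\ \ge\ |X(e)+Y(e)|\ \ge\ |X(e)+H'|+|Y(e)+H'|-|H'|\ \ge\ |X|+|Y|-|H'|,
\end{align*}
where $H'=\mathsf{H}(X(e)+Y(e))$, and since stabilizers are not monotone under inclusion of sumsets, nothing relates $H'$ to $H=\mathsf{H}(X+Y)$: one can have $|H'|>|H|$, and $|X+H|+|Y+H|$ can strictly exceed $|X|+|Y|$. Consequently your minimal-counterexample claim---that minimality forces every transform with $e\in X-Y$ to be trivial---does not follow: a nontrivial transform produces only the $H'$-bound above, which need not contradict the failure of the $H$-bound for $(X,Y)$, so a minimal counterexample is not forced into the periodic alternative. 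Reconciling $H'$ with $H$ is the technical heart of Kneser's theorem (it is precisely what separates it from Cauchy--Davenport, where the same transform argument does close), and every complete proof---Kneser's original, Mann's, Nathanson's, Tao--Vu's, or the treatment in \cite{Gr}---spends its real effort at this point, typically via a secondary analysis of how $X+Y$ meets individual cosets of the new stabilizer together with a more elaborate induction. As written, your outline is the correct standard strategy, honestly annotated, but it stops short of a proof at exactly the decisive step.
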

If the additive group $A$ is taken to be $\Z$, then $H$ only consists of the identity, thus $|H|=1$, $|X+H|=|X|$, and $|Y+H|=|Y|$. When calculating the sum index of the complete bipartite graph $K_{m,\ n}$, we take the sum of the ranks of two vertices if and only if they are in different parts. So by Kneser's theorem, we have $S(K_{m,\ n})\ge m+n-1$. Then, for the upper bound, if we assign $X=\{1,\ 2,\ ...,\ m\}$ to the vertices in one part, and assign $Y=\{m+1,\ m+2,\ ...,\ m+n\}$ to the vertices in the other part, then we have $X+Y=\{m+2,\ m+3,\ ...,\ 2m+n\}$, so $|X+Y|=m+n-1$, which means $S(K_{m,\ n})\le m+n-1$; hence $S(K_{m,\ n})=m+n-1$.

With a graph theoretic proof, Harrington and Wong \cite{HW} also determined the sum indices of the complete graphs.

\begin{theorem}[Harrington and Wong \cite{HW}]\label{complete}
    Let $K_n$ be the complete graph with $n$ vertices. Then
    \begin{align*}
        S(K_n)=2n-3.
    \end{align*}
\end{theorem}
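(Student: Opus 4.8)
The plan is to recast the problem as a question about restricted sumsets of integers. Since $K_n$ is complete, every pair of distinct vertices is adjacent, so for an injection $f\colon V\hookrightarrow\Z$ the set of rank sums is exactly $\{f(u)+f(v):u\neq v\}$. Writing $X=f(V)\subseteq\Z$, which is an arbitrary $n$-element set of integers, this is the \emph{restricted sumset} $X\,\widehat{+}\,X:=\{x+y:x,y\in X,\ x\neq y\}$. Hence $S(K_n)=\min\{|X\,\widehat{+}\,X|:X\subseteq\Z,\ |X|=n\}$, and the theorem is equivalent to the claim that the minimum size of a restricted sumset of $n$ distinct integers is $2n-3$.

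For the upper bound I would take the arithmetic progression $X=\{1,2,\dots,n\}$. Its restricted sums are precisely the integers in $\{3,4,\dots,2n-1\}$, each of which is attained, so $|X\,\widehat{+}\,X|=2n-3$ and therefore $S(K_n)\le 2n-3$.

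The heart of the argument is the matching lower bound. Given any $X\subseteq\Z$ with $|X|=n$, order its elements as $x_1<x_2<\dots<x_n$ and consider the chain
\[
x_1+x_2<x_1+x_3<\dots<x_1+x_n<x_2+x_n<x_3+x_n<\dots<x_{n-1}+x_n.
\]
Each inequality is strict because the two consecutive sums differ in exactly one summand, and each listed term pairs two distinct indices, so all of them belong to $X\,\widehat{+}\,X$. Counting gives $(n-1)+(n-2)=2n-3$ distinct elements, whence $|X\,\widehat{+}\,X|\ge 2n-3$. Combining the two bounds yields $S(K_n)=2n-3$.

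I expect the main subtlety to be the lower bound, and in particular the temptation to deduce it from the full sumset. Over $\Z$ one has $|X+X|\ge 2n-1$ (Kneser's theorem with trivial stabiliser, exactly as in the bipartite case above), but passing from $X+X$ to $X\,\widehat{+}\,X$ removes the two extreme diagonal terms $2x_1$ and $2x_n$, which only gives $|X\,\widehat{+}\,X|\ge |X+X|-2$, an inequality in the wrong direction. The correct additive-combinatorics framework is the Erd\H{o}s--Heilbronn-type bound for restricted sumsets; in the torsion-free group $\Z$ its extremal value is precisely $2n-3$, attained by arithmetic progressions, which is why the direct ordered-chain argument above is the cleanest route to the result.
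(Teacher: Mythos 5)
Your proof is correct. The upper bound via $X=\{1,2,\dots,n\}$ is exactly what the paper uses. Where you diverge is the lower bound: the paper deduces $|X\,\hat{+}\,X|\ge 2n-3$ by invoking the Dias da Silva--Hamidoune theorem (the Erd\H{o}s--Heilbronn conjecture) over the field $\R$, where the characteristic is infinite so the bound $\min\{\mathrm{char}(\F),\,2|X|-3\}$ reduces to $2|X|-3$; you instead prove it directly with the ordered chain
\begin{align*}
x_1+x_2<x_1+x_3<\dots<x_1+x_n<x_2+x_n<\dots<x_{n-1}+x_n,
\end{align*}
which exhibits $(n-1)+(n-2)=2n-3$ distinct restricted sums. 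Your chain argument is valid: consecutive terms differ in exactly one summand, so the inequalities are strict, and every term pairs distinct indices. This is the more elementary route --- it is self-contained and exploits the total order on $\Z$, whereas the paper's citation of the restricted-sumset theorem (whose proof requires the polynomial method or exterior algebra) is deliberately heavier because the authors' point in that section is to showcase the connection between sum index and additive combinatorics, and the characteristic-$p$ version of the bound is what generalizes to settings (such as $\Z_p^2$ in their final section) where no such ordering exists. Your closing remark correctly identifies why the full-sumset bound $|X+X|\ge 2n-1$ cannot be transferred to the restricted sumset by simply deleting the diagonal terms; that is indeed the trap the chain argument avoids.
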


This result can also be seen as a corollary of a well-known result in additive combinatorics. For two sets $X$ and $Y$ in the same group/field, the \emph{restricted sumset} of $X$ and $Y$ is defined by $X\hat +Y=\{x+y:\ x\in X,\ y\in Y,\ x\neq y\}$. The following result is known as the Erd\H os-Heilbronn conjecture \cite{EG}, and it was proved by Dias da Silva and Hamidoune \cite{DDSH}. Its generalized form $X\hat + Y$ was proved by Alon, Nathanson, and Ruzsa \cite{ANR}.

\begin{theorem}[Dias da Silva and Hamidoune \cite{DDSH}] \label{restricted}
    Let $\F$ be a field and let $X\subseteq \F$ be a finite nonempty subset. Then
    \begin{align*}
        |X\hat +X|\ge \min\{char(\F),\ 2|X|-3\},
    \end{align*}
    where $char(\F)$ is the characteristic of $\F$, which is defined to be $\infty$ if it does not exist.
\end{theorem}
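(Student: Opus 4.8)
The plan is to prove this lower bound by the polynomial method, using Alon's Combinatorial Nullstellensatz, which gives a short self-contained argument; the original proof of Dias da Silva and Hamidoune instead proceeded through the exterior algebra and the representation theory of the symmetric group. Recall the nonvanishing form of the Nullstellensatz: if $Q\in\F[x,y]$ has total degree $t_1+t_2$, the coefficient of $x^{t_1}y^{t_2}$ in $Q$ is nonzero, and $S_1,S_2\subseteq\F$ satisfy $|S_1|>t_1$ and $|S_2|>t_2$, then $Q$ does not vanish identically on $S_1\times S_2$. Write $k=|X|$, $p=\operatorname{char}(\F)$, and $m=\min\{p,\,2k-3\}$; the cases $k\le 2$ are immediate (for $k=2$ one has $|X\hat+X|=1=m$), so assume $k\ge 3$, and suppose toward a contradiction that $|X\hat+X|\le m-1$.

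The first step is to pad the restricted sumset and build a single witnessing polynomial. Since $m-1<p\le|\F|$ when $m=p$, and $m-1=2k-4<2k-3\le p\le|\F|$ when $m=2k-3$, in either case $m-1<|\F|$, so I can fix a set $E\subseteq\F$ with $X\hat+X\subseteq E$ and $|E|=m-1$ and define
\[
P(x,y)=(x-y)\prod_{e\in E}(x+y-e)\in\F[x,y].
\]
Then $\deg P=m$, and $P$ vanishes on the whole grid $X\times X$: for distinct $x,y\in X$ the sum $x+y$ lies in $X\hat+X\subseteq E$ and annihilates the product, while for $x=y$ the factor $x-y$ vanishes. Because a monomial of degree $m$ sees only the leading homogeneous part $(x-y)(x+y)^{m-1}$ of $P$, the binomial theorem shows that the coefficient of $x^{t_1}y^{t_2}$ with $t_1+t_2=m$ is exactly $\binom{m-1}{t_1-1}-\binom{m-1}{t_1}$.

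The second step is to locate a degree-$m$ monomial whose coefficient survives in $\F$ and whose exponents are both at most $k-1$; the Nullstellensatz applied with $S_1=S_2=X$ (legitimate as $|X|=k>t_i$) then produces a point of $X\times X$ at which $P\ne 0$, the desired contradiction. There are two regimes. If $m=2k-3$, I take $x^{k-1}y^{k-2}$, whose coefficient $\binom{2k-4}{k-2}-\binom{2k-4}{k-1}$ is the Catalan number $C_{k-2}$; since $2k-4<p$ forces $p\nmid(2k-4)!$ and hence $p\nmid C_{k-2}$, this is nonzero in $\F$ (in characteristic $0$ it is simply a positive integer). If instead $m=p$, so $p\le 2k-3<2(k-1)$, an admissible pair $t_1+t_2=p$ with $t_1,t_2\le k-1$ exists, and using $\binom{p-1}{j}\equiv(-1)^{j}\pmod p$ its coefficient is $\equiv(-1)^{t_1-1}-(-1)^{t_1}=2(-1)^{t_1-1}$, nonzero for odd $p$; the residual case $p=2$ is settled by the endpoint monomial $x^{2}$, whose coefficient is $1$ (here $t_2=0\le k-1$ using $k\ge 3$).

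The crux, and the main obstacle, is precisely this coefficient analysis: one must exhibit a top-degree monomial that simultaneously fits inside the grid $X\times X$ (forcing $t_1,t_2\le k-1$, which is why $m\le 2k-3$ is needed) and whose binomial-difference coefficient does not collapse modulo $p$ (which is why $m\le p$ is needed, via the Catalan evaluation and the $\binom{p-1}{j}\equiv(-1)^j$ congruence). These two constraints are exactly what $\min\{p,\,2k-3\}$ records, and they are also what make the bound sharp.
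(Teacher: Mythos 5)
Your proof is correct. Note that the paper does not prove this statement at all: it is quoted as an external result of Dias da Silva and Hamidoune, whose original argument went through exterior algebras (cyclic spaces for Grassmann derivatives), with the paper merely remarking that the generalized version $X\hat+Y$ was later proved by Alon, Nathanson, and Ruzsa. What you have written is essentially that Alon--Nathanson--Ruzsa proof via the Combinatorial Nullstellensatz, and the details check out: the padding of $X\hat+X$ to a set $E$ of size $m-1$ is legitimate since $m-1<|\F|$ in both regimes; the polynomial $P(x,y)=(x-y)\prod_{e\in E}(x+y-e)$ has degree exactly $m$ and vanishes on $X\times X$; the top-degree coefficient of $x^{t_1}y^{t_2}$ is indeed $\binom{m-1}{t_1-1}-\binom{m-1}{t_1}$; the choice $t_1=k-1$, $t_2=k-2$ when $m=2k-3$ gives the Catalan number $C_{k-2}$, which is a unit because $p\nmid(2k-4)!$; and the balanced split when $m=p$ gives $2(-1)^{t_1-1}\bmod p$, with the characteristic-two degeneracy correctly repaired by the monomial $x^2$. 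The one thing worth making explicit is that the antisymmetric factor $(x-y)$ is exactly what would kill the coefficient if one were forced to take $t_1=t_2$, which is why the bound is $2|X|-3$ rather than $2|X|-1$; your choice of an odd-degree monomial sidesteps this, and your closing paragraph correctly identifies the two constraints encoded by $\min\{char(\F),\ 2|X|-3\}$.
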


If we take $\F$ to be $\R$, then $char(\R)=\infty$. So if the vertices in $K_n$ receive distinct real numbers for their ranks, and we take the sum of each pair of ranks, then by Theorem \ref{restricted}, we will get at least $2n-3$ different rank sums. Thus, we have $S(K_n)\ge 2n-3$, because $\Z\subseteq \R$. For the upper bound, we can assign $X=\{1,\ 2,\ ...,\ n\}$ to the $n$ vertices in $K_n$, and get $X\hat +X=\{3,\ 4,\ ...,\ 2n-1\}$, so $|X\hat +X|=2n-3$, which implies $S(K_n)\le 2n-3$. Hence, $S(K_n)=2n-3$.

Also, we would like to mention that, in general, when we take the sum of two numbers if they are on two adjacent vertices, essentially we are making a \emph{partial sumset}, which has a variety of applications. For instance, it is involved in the Balog-Szemer\'edi-Gowers theorem \cite{BS,Go,TV,Zh}. A partial sumset $X+_\Gamma Y:=\{x+y:\ (x,\ y)\in \Gamma\}$, where $\Gamma\subseteq X\times Y$, is a subset of $X+Y$. For a graph $G=(V,\ E)$, a vertex subset $U\subseteq V$, and a rank assignment $f$, we let
\begin{align*}
    f(U)=\{f(u):\ u\in U\}.
\end{align*}
Then if we take both $X$ and $Y$ to be $f(V)$, the set of all the numbers assigned to the vertices, and let $\Gamma$ be the set of all $(f(u),\ f(v))$ with $uv\in E$, then $\sigma(G,\ f)$ is just $f(V)+_\Gamma f(V)$. 

Regarding the connections between graph sum index and additive combinatorics, we will give some further ideas and remarks in the last section.

Now, we introduce some basic bounds on graph sum index.

In a graph, we say that two edges are adjacent if they share a common vertex. Because different vertices receive different ranks, we know that two adjacent edges must have different rank sums. So if we treat each rank sum as a color, then we will get a proper edge coloring of $G$ where adjacent edges get different colors. Thus, the sum index of $G$ is lower bounded by its chromatic index $\chi'(G)$, which is either $\Delta(G)$ or $\Delta(G)+1$ (Vizing \cite{Vi}), where $\Delta(G)$ is the maximum vertex degree of $G$. For the upper bound, it is easy to see that, for $G$ and its subgraph $G'$, we have $S(G')\le S(G)$. For $K_n$, the complete graph on $n$ vertices, we have mentioned in Theorem \ref{complete} that $S(K_n)=2n-3$. Hence, for any graph $G$ of order $n$, we know $S(G)\le 2n-3$.

\begin{proposition}
    Let $G$ be a graph of order $n$. Then
    \begin{align*}
        \Delta(G)\le \chi'(G)\le S(G)\le 2n-3.
    \end{align*}
\end{proposition}

There is another lower bound given by Haslegrave in \cite{Has}. In a graph $G$ of order $n$, we assume that the degrees of the $n$ vertices are $\delta_1(G)\le \delta_2(G)\le \delta_3(G)\le ...\le \delta_n(G)$.

\begin{theorem}[Haslegrave \cite{Has}] \label{lb}
    Let $G$ be a graph of order $n$. Then
    \begin{align*}
        S(G)\ge \max_{1\le k\le n-1} (\delta_k(G)+\delta_{k+1}(G)-k).
    \end{align*}
\end{theorem}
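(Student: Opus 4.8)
\emph{Proof proposal.} The plan is to reduce the statement to a single inclusion--exclusion estimate between the two sets of sums produced by two cleverly chosen vertices. Fix an optimal injection $f$ and relabel the vertices as $u_1,\dots,u_n$ so that $f(u_1)<f(u_2)<\cdots<f(u_n)$. For a vertex $u$ write $\Sigma_u:=\{f(u)+f(w):w\in N(u)\}$ for the set of rank sums of the edges at $u$. Since $f$ is injective we have $|\Sigma_u|=\deg(u)$, and clearly $\Sigma_u\subseteq\sigma(G,f)$, so for any two vertices $x,y$ we get $S(G)=|\sigma(G,f)|\ge|\Sigma_x\cup\Sigma_y|=\deg(x)+\deg(y)-|\Sigma_x\cap\Sigma_y|$. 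The whole game is therefore to exhibit two vertices of large degree whose sum-sets barely meet.

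First I would establish the following interval principle. If $a<n$, then every element of $\Sigma_{u_a}$ is at most $f(u_a)+f(u_n)$, while every element of $\Sigma_{u_n}$ is at least $f(u_n)+f(u_1)$; hence a common value $s$ must satisfy $s=f(u_n)+f(w)$ with $f(w)=s-f(u_n)\le f(u_a)$, forcing $w\in\{u_1,\dots,u_a\}$. As $s$ is recovered from $w$, this gives $|\Sigma_{u_a}\cap\Sigma_{u_n}|\le a$, and therefore
\begin{align*}
S(G)\ \ge\ \deg(u_a)+\deg(u_n)-a\qquad(1\le a\le n-1).
\end{align*}
Taking $a=1$ the overlap is at most $1$, and since $u_1$ and $u_n$ are distinct their degrees sum to at least the two smallest degrees, so $S(G)\ge \deg(u_1)+\deg(u_n)-1\ge\delta_1(G)+\delta_2(G)-1$. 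This already settles the case $k=1$, and applying the same argument to $-f$ (which reverses the rank order) yields the mirror family anchored at the global minimum $u_1$.

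For a general $k$ the target $\delta_k(G)+\delta_{k+1}(G)-k$ has exactly the shape ``(two large degrees) $-$ (small overlap)'', so the plan is to locate two vertices $x,y$ with $\deg(x)\ge\delta_{k+1}(G)$, $\deg(y)\ge\delta_k(G)$ and $|\Sigma_x\cap\Sigma_y|\le k$. The natural candidate is to keep the global maximum $u_n$ as one anchor and to pick $x=u_a$ with $a\le k$ among the lowest ranks, for then the interval principle gives overlap at most $a\le k$; choosing $x$ to be a bottom-$k$ vertex of degree at least $\delta_{k+1}(G)$ would then finish the proof.

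The hard part is precisely this selection. An optimal $f$ is free to hide the vertices of smallest degree at the two ends of the rank order and throughout the bottom $k$ positions, so the naive anchors $u_n$ and $u_a$ may both have small degree and the estimate above degrades to something trivial. The crux is thus to guarantee, for the value of $k$ that maximizes the bound, a pair of vertices of degrees at least $\delta_k(G)$ and $\delta_{k+1}(G)$ whose sum-sets still overlap in at most $k$ values --- equivalently, to keep the overlap window under control when neither chosen vertex is forced to be rank-extreme. I would attack this with an exchange/extremal argument that selects the pair to maximize degree while minimizing the length of the common interval $[\,f(y)+f(u_1),\,f(x)+f(u_n)\,]$, playing the two mirror families against each other and exploiting the freedom in the outer maximum over $k$. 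Controlling this overlap against an adversarial labeling is the step I expect to require the most care.
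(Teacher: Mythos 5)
Your interval principle and the resulting family of bounds $S(G)\ge \deg(u_a)+\deg(u_n)-a$ are correct, and they do settle the case $k=1$: the only possible common element of $\Sigma_{u_1}$ and $\Sigma_{u_n}$ is $f(u_1)+f(u_n)$, and any two distinct vertices have degree sum at least $\delta_1(G)+\delta_2(G)$. (The paper itself only quotes this result from Haslegrave, so there is no in-paper proof to compare against.) For $k\ge 2$, however, the argument is incomplete, and the strategy you sketch for closing it cannot succeed as stated. Every pair you can control contains a rank-extreme vertex ($u_1$ or $u_n$), but the lower bound must hold for \emph{every} labelling, including one that places the lowest-degree vertices at the extreme ranks. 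Concretely, take $G=K_{d+1}$ together with two isolated vertices and a labelling with the isolated vertices at ranks $1$ and $n$: every bound in your two mirror families is then at most $\max_a(\deg(u_a)-a)\le d-1$, while the theorem asserts $S(G)\ge \delta_3(G)+\delta_4(G)-3=2d-3$. So the step you flag as ``the hard part'' is not a technicality; the anchoring-at-an-extreme approach provably falls short.

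What is missing is, first, a two-sided version of your overlap estimate: for $i<j$, if $s=f(u_i)+f(a)=f(u_j)+f(b)$ with $a\in N(u_i)$ and $b\in N(u_j)$, then $f(a)-f(b)=f(u_j)-f(u_i)>0$, so either $f(b)\le f(u_i)$ (at most $i$ choices of $b$, and $s$ is determined by $b$) or $f(a)>f(u_j)$ (at most $n-j$ choices of $a$); hence $|\Sigma_{u_i}\cap\Sigma_{u_j}|\le i+(n-j)$ and
\begin{align*}
S(G)\ \ge\ \deg(u_i)+\deg(u_j)-\bigl(i+(n-j)\bigr)\qquad\text{for all } 1\le i<j\le n,
\end{align*}
not only for pairs containing $u_1$ or $u_n$. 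Second, one needs a selection lemma showing that among the ``staircase'' of pairs with $i+(n-j)\le k$ some pair has degree sum at least $\delta_k(G)+\delta_{k+1}(G)$; this follows from counting (at most $k-1$ vertices of $G$ have degree below $\delta_k(G)$ and at most $k$ have degree below $\delta_{k+1}(G)$, and one checks these cannot simultaneously block all admissible pairs), but it is precisely the exchange argument you left unexecuted. As written, the proposal proves the theorem only for $k=1$.
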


In particular, if $G$ is a $d$-regular graph, then $S(G)\ge 2d-1$.

We organize this paper as follows: In Section 2, we determine the sum indices of some (families of) graphs, including the complete multipartite graphs, hypercubes, and cluster graphs with cluster size four. We devote Section 3 to the maximum number of edges in a graph with a fixed sum index, which is related to the forbidden subgraph problem. Some remarks are given in Section 4.

\section{Sum indices of different graphs}

\subsection{Complete multipartite graphs}

First, let us generalize Theorem \ref{bipartite} to the complete multipartite graphs.
\begin{theorem} \label{multipartite}
    For $k\ge 2$, let $n_1\ge n_2\ge ...\ge n_k$ be positive integers, and let $K_{n_1,\ n_2,\ ...,\ n_k}=(V,\ E)$ be the complete $k$-partite graph with $n_i$ vertices in the $i$-th part. Then
    \begin{align*}
        S(K_{n_1,\ n_2,\ ...,\ n_k})=2N-n_1-n_2-1,
    \end{align*}
    where $N:=\sum_{i=1}^k n_i$.
\end{theorem}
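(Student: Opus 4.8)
The plan is to establish matching upper and lower bounds for $S(K_{n_1,\dots,n_k})$, writing $N=\sum_i n_i$ and denoting the parts by $V_1,\dots,V_k$ with $|V_i|=n_i$.

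For the \textbf{upper bound} I would exhibit one explicit labeling. Assign to $V_1$ the bottom block of integers $\{1,\dots,n_1\}$, to $V_2$ the top block $\{N-n_2+1,\dots,N\}$, and distribute the remaining integers $\{n_1+1,\dots,N-n_2\}$ arbitrarily among $V_3,\dots,V_k$. The claim is that the set of rank sums is exactly the interval $\{n_1+2,\dots,2N-n_2\}$, of size $2N-n_1-n_2-1$. The endpoints are immediate: the smallest cross-part sum pairs the global minimum $1$ with the smallest rank outside $V_1$, namely $n_1+1$, giving $n_1+2$, and dually the largest is $N+(N-n_2)=2N-n_2$. For an intermediate value $v$ I would split into $v\le N+1$ and $v\ge N+2$: in the first range write $v=1+(v-1)$ with $v-1\in\{n_1+1,\dots,N\}$ lying outside $V_1$, and in the second range write $v=N+(v-N)$ with $v-N\in\{2,\dots,N-n_2\}$ lying outside $V_2$. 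Either way $v$ is realized by an edge, irrespective of how the middle parts were split, so $S(K_{n_1,\dots,n_k})\le 2N-n_1-n_2-1$.

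For the \textbf{lower bound}, fix any injective $f$, set $A=f(V)$, and let $P$ be the rank set of the part containing $\mu:=\min A$, with $|P|=s$. The key device is a two-``fan'' argument with a carefully chosen upper anchor: rather than the global maximum, take $\rho:=\max(A\setminus P)$, the largest rank outside $\mu$'s part, lying in a part with rank set $Q$, $|Q|=q$, so that $P\ne Q$. Consider the cross-part sums
\[
F_1=\{\mu+a:\ a\in A\setminus P\},\qquad F_2=\{\rho+a:\ a\in A\setminus Q\},
\]
which lie in $\sigma(G,f)$ and have sizes $N-s$ and $N-q$. The crux is that they overlap in a single element: if $\mu+a=\rho+a'$ with $a\in A\setminus P$ and $a'\in A\setminus Q$, then $a\le\rho$ forces $a'\le\mu$, hence $a'=\mu$ and $a=\rho$, so the only common value is $\mu+\rho$. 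Therefore $|\sigma(G,f)|\ge |F_1\cup F_2|=(N-s)+(N-q)-1=2N-s-q-1$. Since $P$ and $Q$ are distinct parts, $s+q\le n_1+n_2$, which yields $|\sigma(G,f)|\ge 2N-n_1-n_2-1$ for every $f$.

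The main obstacle is exactly this choice of anchors. The naive version, basing both fans at the global minimum and global maximum, forces the overlap to be one element only when the two extreme ranks lie in different parts; when they share a (necessarily large) part the bound degrades to about $2N-2n_1$, which is too weak as soon as $n_1>n_2$. Anchoring the upper fan at $\rho=\max(A\setminus P)$ repairs this: it guarantees the two anchors sit in distinct parts, so the size bound $s+q\le n_1+n_2$ always holds, while preserving the one-element-overlap computation, since $a\notin P$ still forces $a\le\rho$. With this adjustment the argument is uniform and needs no case analysis on the $n_i$; I would also note that for $k=2$ it recovers Theorem~\ref{bipartite}, and combining the two bounds gives the stated equality.
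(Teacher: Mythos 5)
Your proof is correct and follows essentially the same route as the paper: the identical labeling (bottom block to $V_1$, top block to $V_2$) for the upper bound, and for the lower bound the same two-fan argument anchored at the global minimum and at the maximum rank \emph{outside} that minimum's part, overlapping in the single sum $\mu+\rho$. The only cosmetic difference is that you additionally verify every value of the interval is attained in the upper-bound construction, which is not needed.
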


\begin{proof}
    Let us assume $V=\bigsqcup_{i=1}^k V_i$, where $V_i$ consists of the vertices in the $i$-th part.

    For the upper bound, taking the arithmetic progression $1,\ 2,\ ...,\ N$, we assign $1,\ 2,\ ...,\ n_1$ to the vertices in $V_1$, assign $N-n_2+1,\ N-n_2+2,\ ...,\ N$ to the vertices in $V_2$, and arbitrarily assign $n_1+1,\ n_1+2,\ ...,\ N-n_2$ to other vertices. It is easy to check that now we have at most $2N-n_1-n_2-1$ different sums, which are between $n_1+2$ and $2N-n_2$.

    For the lower bound, we let $f:V\lhook\joinrel\longrightarrow \Z$ be arbitrary, and assume $f(u)=\min\{f(v):\ v\in V\}$ for $u\in V_\alpha$. Then
    \begin{align*}
        \{f(u)\}+f(V\setminus V_\alpha)
    \end{align*}
    gives us $|V\setminus V_\alpha|$ different sums, with the largest one being $f(u)+f(w)$, where $w$ is the vertex such that $f(w)=\max\{f(v):\ v\in V\setminus V_\alpha\}$. Then we may assume $w\in V_\beta$ with $\beta\neq \alpha$, and
    \begin{align*}
        \{f(w)\}+f(V\setminus V_\beta)
    \end{align*}
    will give us $|V\setminus V_\beta|$ different sums, with the smallest one being $f(u)+f(w)$. So we have 
    \begin{align*}
        |V\setminus V_\alpha|+|V\setminus V_\beta|-1&=2|V|-|V_\alpha|-|V_\beta|-1 \\
        &\ge 2N-n_1-n_2-1
    \end{align*}
    different sums. 
\end{proof}

Furthermore, we show that if a graph $G$ is constructed by adding more edges into $K_{n_1,\ n_2,\ ...,\ n_k}$ in certain ways, then the sum index of $G$ is still $2N-n_1-n_2-1$.

For any positive integer $n$, we can construct a graph $L_n$ on $n$ vertices, by letting
\begin{align*}
    V(L_n)=\{v_1,\ v_2,\ ...,\ v_n\}
\end{align*}
and 
\begin{align*}
    E(L_n)=\{(v_i,\ v_j):\ i,\ j\in [1,\ n],\ i\neq j,\ i+j\ge n+2\}.
\end{align*}

We can see that, in $L_n$, vertex $v_i$ has $i-1$ or $i-2$ neighbors.

For two graphs $G_1$ and $G_2$, the \emph{join} of them, denoted by $G_1\vee G_2$, is constructed by connecting every vertex in $G_1$ to every vertex in $G_2$.

\begin{theorem}
    For $k\ge 2$, let $n_1\ge n_2\ge ...\ge n_k$ be positive integers. Then
    \begin{align*}
        S(L_{n_1}\vee L_{n_2}\vee K_{n_3}\vee K_{n_4}\vee ...\vee K_{n_k})=2N-n_1-n_2-1,
    \end{align*}
    where $N:=\sum_{i=1}^k n_i$.
\end{theorem}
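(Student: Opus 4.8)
The plan is to establish the two inequalities separately, with the lower bound being essentially immediate and all of the real work sitting in the upper-bound construction. For the lower bound, I would note that the join $L_{n_1}\vee L_{n_2}\vee K_{n_3}\vee\cdots\vee K_{n_k}$ contains every edge joining two distinct parts, so it has the complete multipartite graph $K_{n_1,\ n_2,\ \ldots,\ n_k}$ as a spanning subgraph. Since $S(G')\le S(G)$ whenever $G'$ is a subgraph of $G$, Theorem~\ref{multipartite} gives at once
\begin{align*}
    S(L_{n_1}\vee L_{n_2}\vee K_{n_3}\vee\cdots\vee K_{n_k})\ge S(K_{n_1,\ n_2,\ \ldots,\ n_k})=2N-n_1-n_2-1.
\end{align*}

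For the upper bound I would reuse the labeling from the proof of Theorem~\ref{multipartite} and show that the extra intra-part edges create no new rank sums. Concretely, I would assign $1,\ 2,\ \ldots,\ n_1$ to $V_1$, assign $N-n_2+1,\ \ldots,\ N$ to $V_2$, and distribute $n_1+1,\ \ldots,\ N-n_2$ arbitrarily among $V_3,\ \ldots,\ V_k$, so that, as before, all cross-part sums land in the interval $[n_1+2,\ 2N-n_2]$, which contains exactly $2N-n_1-n_2-1$ integers. The decisive point — and the reason $L_n$ is defined precisely by the condition $i+j\ge n+2$ — is the orientation of the two copies of $L_n$. On the low block $V_1$ I would use the increasing placement $v_i\mapsto i$, so an edge of $L_{n_1}$ contributes a sum $i+j\in[n_1+2,\ 2n_1-1]$; on the high block $V_2$ I would use the decreasing placement $v_i\mapsto N-i+1$, so an edge of $L_{n_2}$ contributes the sum $2N+2-(i+j)\le 2N-n_2$. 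In other words, $L_{n_1}$ is exactly the largest intra-$V_1$ graph whose edge sums avoid falling below $n_1+2$, and symmetrically $L_{n_2}$ avoids exceeding $2N-n_2$.

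It would then remain only to confirm that every intra-part sum lies in $[n_1+2,\ 2N-n_2]$. For $V_1$ and $V_2$ this comes from the two ranges above together with $2n_1-1\le 2N-n_2$ and $2N-2n_2+3\ge n_1+2$, both of which follow from $N\ge n_1+n_2$. For the complete blocks $V_3,\ \ldots,\ V_k$ no special placement is needed, since every label there lies in $[n_1+1,\ N-n_2]$ and hence any edge of $K_{n_i}$ has sum between $(n_1+1)+(n_1+2)$ and $(N-n_2)+(N-n_2-1)$, again inside the interval. I expect the only real obstacle to be choosing these orientations correctly: the argument is bookkeeping rather than a new idea, but reversing either copy of $L_n$ would push an intra-part sum outside $[n_1+2,\ 2N-n_2]$ and destroy the count.
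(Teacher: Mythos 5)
Your proposal is correct and follows essentially the same route as the paper: the lower bound via the spanning complete multipartite subgraph and Theorem~\ref{multipartite}, and the upper bound via the identical labeling ($v_i\mapsto i$ on $V_1$, $v_i\mapsto N-i+1$ on $V_2$, middle labels elsewhere) with the check that the condition $i+j\ge n+2$ defining $L_n$ keeps all intra-part sums inside $[n_1+2,\ 2N-n_2]$. Your verification is, if anything, slightly more explicit than the paper's about why each intra-part range sits inside that interval.
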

\begin{proof}
    We have $S(L_{n_1}\vee L_{n_2}\vee K_{n_3}\vee K_{n_4}\vee ...\vee K_{n_k})\ge 2N-n_1-n_2-1$ because $K_{n_1,\ n_2,\ ...,\ n_k}$ is a subgraph of $L_{n_1}\vee L_{n_2}\vee K_{n_3}\vee K_{n_4}\vee ...\vee K_{n_k}$.

    To show that $S(L_{n_1}\vee L_{n_2}\vee K_{n_3}\vee K_{n_4}\vee ...\vee K_{n_k})\le 2N-n_1-n_2-1$, we assume
    \begin{itemize}
        \item $V(L_{n_1})=\{v_1,\ v_2,\ ...,\ v_{n_1}\}$ and $E(L_{n_1})=\{(v_i,\ v_j):\ i,\ j\in [1,\ n_1],\ i+j\ge n_1+2\}$;
        \item $V(L_{n_2})=\{v'_1,\ v'_2,\ ...,\ v'_{n_2}\}$ and $E(L_{n_2})=\{(v'_i,\ v'_j):\ i,\ j\in [1,\ n_2],\ i+j\ge n_2+2\}$.
    \end{itemize}
    We assign $1,\ 2,\ ...,\ n_1$ to $v_1,\ v_2,\ ...,\ v_{n_1}$, assign $N,\ N-1,\ ...,\ N-n_2+1$ to $v'_1,\ v'_2,\ ...,\ v'_{n_2}$, and arbitrarily assign $n_1+1,\ n_1+2,\ ...,\ N-n_2$ to other vertices. As shown in Theorem \ref{multipartite}, those edges in the subgraph $K_{n_1,\ n_2,\ ...,\ n_k}$ have rank sums between $n_1+2$ and $2N-n_2$. We also have that
    \begin{itemize}
        \item If an edge is inside $L_{n_1}$, then by the definition of $L_n$, this edge has rank sum $\ge n_1+2$;
        \item If an edge is inside $L_{n_2}$, then by the definition of $L_n$, this edge has rank sum $\le 2N-n_2$;
        \item If an edge is inside some $K_{n_i}$ with $3\le i\le k$, then the rank sum of this edge is between $2n_1+3$ and $2N-2n_2-1$.
    \end{itemize}
    So $S(L_{n_1}\vee L_{n_2}\vee K_{n_3}\vee K_{n_4}\vee ...\vee K_{n_k})\le 2N-n_1-n_2-1$, and thus we have the equality.
\end{proof}

This also tells us that, in $K_{n_1,\ n_2,\ ...,\ n_k}$, if we add edges to the first part while keeping it a subgraph of $L_{n_1}$, add edges to the second part while keeping it a subgraph of $L_{n_2}$, and add any edges to the third, fourth, ..., $k$-th parts, then the new graph we get also has sum index $2N-n_1-n_2-1$.

\begin{corollary}
    For $k\ge 2$, let $n_1\ge n_2\ge ...\ge n_k$ be positive integers. If for some graph $G$, we have $K_{n_1,\ n_2,\ ...,\ n_k}$ being a subgraph of $G$, and $G$ being a subgraph of $L_{n_1}\vee L_{n_2}\vee K_{n_3}\vee K_{n_4}\vee ...\vee K_{n_k}$, then 
    \begin{align*}
        S(G)=2N-n_1-n_2-1.
    \end{align*}
\end{corollary}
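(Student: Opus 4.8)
The plan is to exploit the monotonicity of the sum index under taking subgraphs, which was observed in Section~1: whenever $G'$ is a subgraph of $G$, one has $S(G')\le S(G)$. The corollary then follows immediately by sandwiching $G$ between two graphs whose sum indices have already been determined and which happen to coincide. No new labelling of $G$ needs to be constructed.

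First I would establish the lower bound. Since $K_{n_1,\ n_2,\ \ldots,\ n_k}$ is a subgraph of $G$ by hypothesis, monotonicity gives $S(G)\ge S(K_{n_1,\ n_2,\ \ldots,\ n_k})$, and by Theorem~\ref{multipartite} the right-hand side equals $2N-n_1-n_2-1$. Hence $S(G)\ge 2N-n_1-n_2-1$.

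Next I would establish the upper bound. Since $G$ is a subgraph of $L_{n_1}\vee L_{n_2}\vee K_{n_3}\vee K_{n_4}\vee\cdots\vee K_{n_k}$ by hypothesis, monotonicity gives $S(G)\le S(L_{n_1}\vee L_{n_2}\vee K_{n_3}\vee K_{n_4}\vee\cdots\vee K_{n_k})$, and the theorem immediately preceding this corollary shows that this quantity is again $2N-n_1-n_2-1$. Combining the two inequalities yields $S(G)=2N-n_1-n_2-1$.

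I do not anticipate any genuine obstacle here: the entire content has already been supplied by Theorem~\ref{multipartite} and the preceding theorem, whose whole point was that the two bracketing graphs have equal sum index. The only things to check are that the hypotheses are applied in the correct direction—the smaller graph $K_{n_1,\ n_2,\ \ldots,\ n_k}$ for the lower bound and the larger graph $L_{n_1}\vee L_{n_2}\vee K_{n_3}\vee\cdots\vee K_{n_k}$ for the upper bound—and that these two extremal values genuinely agree, which they do by construction. In effect, the optimal assignment exhibited in the preceding proof already serves $G$ through the subgraph relation, so the corollary is a direct consequence rather than a fresh argument.
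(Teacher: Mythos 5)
Your proposal is correct and is exactly the argument the paper intends: the corollary is stated without proof precisely because it follows by sandwiching $G$ between $K_{n_1,\ n_2,\ \ldots,\ n_k}$ and $L_{n_1}\vee L_{n_2}\vee K_{n_3}\vee\cdots\vee K_{n_k}$, using subgraph monotonicity of the sum index together with Theorem \ref{multipartite} and the theorem immediately preceding the corollary. Nothing further is needed.
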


\subsection{Hypercubes}
We show that the sum indices of \emph{hypercubes} attain the lower bound in Theorem \ref{lb}. The $1$-cube $Q_1$ consists of only two vertices and an edge between them, and the $n$-cube $Q_n$ is defined by $Q_{n-1}\square P_2$, which means $Q_n$ can be constructed by copying $Q_{n-1}$ and connecting every vertex with its copy. 

\begin{theorem}
    We have
    \begin{align*}
        S(Q_n)=2n-1.
    \end{align*}
\end{theorem}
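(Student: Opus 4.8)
The lower bound is immediate. The hypercube $Q_n$ is $n$-regular, so the special case of Theorem \ref{lb} quoted just after its statement gives $S(Q_n)\ge 2n-1$. All the work therefore goes into producing an injective labeling of $V(Q_n)$ that uses at most $2n-1$ distinct rank sums.

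The plan is to induct on $n$ using the recursive structure $Q_n=Q_{n-1}\square P_2$, maintaining the stronger invariant that $Q_{n-1}$ admits a labeling whose rank-sum set is a block of $2n-3$ consecutive integers. Writing $Q_n$ as two copies $A$ and $B$ of $Q_{n-1}$ joined by the perfect matching that links each vertex to its copy, I would label $A$ by the inductive labeling and then choose the labels on $B$ so that (i) the $2^{n-1}$ matching edges contribute only a very small set of new rank sums, and (ii) the internal rank sums of $B$ reproduce an interval overlapping the interval of $A$ in all but two elements. If this is arranged so that the three contributions — the sums inside $A$, the sums inside $B$, and the matching sums — union to a block of exactly $2n-1$ consecutive integers, the induction closes; the base case $Q_1$ (a single edge, giving one sum $=2\cdot 1-1$) is trivial. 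The guiding heuristic, which I would verify explicitly in the base cases, is that the minimum- and maximum-labeled vertices alone already realize all $2n-1$ rank sums (their $n$ incident edges each give $n$ sums, overlapping in one), forcing every other edge to have a sum strictly between the extremes; this is precisely the configuration in which Haslegrave's bound is tight.

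The hard part will be reconciling the demands in (i) and (ii): keeping all $2^n$ labels distinct while forcing the matching edges to be nearly sum-constant. The natural device for making every matching edge share one rank sum is reflection, setting the $B$-label of a vertex to $C$ minus its $A$-label for a fixed constant $C$; this makes all matching sums equal to $C$ and turns the internal sums of $B$ into the reflection of those of $A$. However, $C$ must lie inside the target interval, since it is itself one of the rank sums, so $C$ is realized by some adjacent pair inside $A$, and the reflection then sends one endpoint of that pair onto a label already used in $A$, destroying injectivity. Thus the genuine obstacle is to break the reflection symmetry just enough to restore injectivity without letting any rank sum escape the length-$(2n-1)$ block. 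I would resolve this by relaxing (i) to allow the matching edges to take two controlled values rather than one: assign each vertex of $B$ one of the two reflected labels $C_1-g(x)$ or $C_2-g(x)$, using the resulting binary choice per vertex as the extra freedom needed to make all labels of $B$ distinct from each other and from $A$, while keeping the internal and matching sums inside the block. Choosing this partition of $V(B)$ so that injectivity and the containment of every edge-sum hold simultaneously is where the bulk of the argument lies, and checking containment type by type (edges inside $A$, edges inside $B$, and matching edges) is the main verification to carry out.
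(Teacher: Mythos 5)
The lower bound half of your argument is complete and matches the paper: $Q_n$ is $n$-regular, so Theorem \ref{lb} gives $S(Q_n)\ge 2n-1$. The upper bound, however, is a plan rather than a proof, and the plan has a genuine gap at exactly the point you flag as ``where the bulk of the argument lies'': you never specify the constants $C_1,C_2$ or the partition of $V(B)$ into the two reflection classes, and you give no argument that a partition exists which simultaneously (a) keeps all $2^n$ labels distinct, (b) keeps the internal sums of $B$ inside the target block, and (c) keeps both matching constants inside the block. Since $C_1$ and $C_2$ must themselves lie in the interval of realized sums, each is (for $n\ge 3$) already realized by an adjacent pair inside $A$, so the naive reflection collides with $A$'s labels no matter which single constant you pick; whether the binary choice per vertex suffices to dodge all such collisions at once is precisely the combinatorial statement you would need to prove, and it is not obviously true. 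A second, subtler issue is your invariant that the sum set is a block of consecutive integers: you verify nothing beyond the base case, and it is not clear this invariant propagates (for instance, any scheme in which the labels of $A$ are partly translated forces you to know which edges of $A$ straddle the translated and untranslated label classes, information your invariant does not record).

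The paper's proof resolves both difficulties by choosing a different, stronger induction hypothesis. It gives up on consecutive sums and instead maintains that the labels are exactly $1,\dots,2^n$, that the sum set is the symmetric (non-consecutive for $n\ge 4$) set $\{2^n+1\}\cup\{2^n+1\pm 2^i:\ 0\le i\le n-2\}$, and --- crucially --- that \emph{every edge has one endpoint labeled in $[1,2^{n-1}]$ and the other in $(2^{n-1},2^n]$}. In the inductive step the small labels of $A$ are kept, the large labels are shifted up by $2^n$ (this is the analogue of your ``two matching constants,'' but placed on $A$ rather than on $B$), and all of $B$ is labeled by a single reflection $v'\mapsto 3\cdot 2^{n-1}+1-f_n(v)$. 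The straddling invariant guarantees that every internal edge of $A$ has its sum shifted by exactly $2^n$, the symmetry of the sum set about $2^n+1$ makes $B$'s reflected sums land back in the same set, the two matching sums become the two new extreme elements, and injectivity is immediate because the three label classes occupy the disjoint ranges $[1,2^{n-1}]$, $(2^{n-1},2^n]$, and $(2^n,2^{n+1}]$. If you want to complete your version, you should either import an invariant of this kind or explicitly construct the partition of $V(B)$ and verify (a)--(c); as written, the existence claim at the heart of your upper bound is unsupported.
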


\begin{proof}
    It is easy to see that $Q_n$ is an $n$-regular graph, so by Theorem \ref{lb}, we have $S(Q_n)\ge 2n-1$.
    
    We prove $S(Q_n)\le 2n-1$ by induction. We shall assign $1,\ 2,\ 3,\ ...,\ 2^n$ to the vertices in $Q_n$, and show that under our assignment, we have $2n-1$ different sums, which are $2^n+1$ and $2^n+1\pm 2^i$ with $0\le i\le n-2$. Let us denote $\{2^n+1\}\cup\{2^n+1\pm 2^i:\ 0\le i\le n-2\}$ by $\mathcal{A}_n$ for each $n\in \N$.

    First, we can assign $1$ and $2$ to the two vertices in $Q_1$, and we have one sum, which is $3$. Now, we assume that under rank assignment $f_n:V(Q_n)\lhook\joinrel\longrightarrow \Z$, we have $1,\ 2,\ 3,\ ...,\ 2^n$ assigned to the vertices in $Q_n$, such that each edge has one vertex with rank $\le 2^{n-1}$, the other vertex with rank $>2^{n-1}$; and under this assignment, we have $2n-1$ different sums, which are the elements in $\mathcal{A}_n$. We make a copy of $Q_n$, denoted by $Q_n'$, and connect each vertex $v\in Q_n$ with its copy $v'\in Q_n'$, to get a $Q_{n+1}$. Then we construct a new rank assignment $f_{n+1}$ for $Q_{n+1}$ in the following fashion:
    \begin{itemize}
        \item (A) If $v\in Q_n$, and $f_n(v)\le 2^{n-1}$, then we let $f_{n+1}(v)=f_n(v)$.
        \item (B) If $v\in Q_n$, and $f_n(v)>2^{n-1}$, then we let $f_{n+1}(v)=f_n(v)+2^n$.
        \item (C) For $v'\in Q_n'$, we let $f_{n+1}(v')=-f_n(v)+3\cdot 2^{n-1}+1$, where $v\in Q_n$ and $v'$ is the copy of $v$.
    \end{itemize}
    Now, in $Q_{n+1}$, an edge could be in $Q_n$, or in $Q_n'$, or between $Q_n$ and $Q_n'$.
    \begin{itemize}
        \item For $uv\in E(Q_n)$, we know one of $u$ and $v$ falls into (A), the other falls into (B), so 
        \begin{align*}
            f_{n+1}(u)+f_{n+1}(v)&=f_n(u)+f_n(v)+2^n \\
            &\in \{2^{n+1}+1\}\cup \{2^{n+1}+1\pm 2^i:\ 0\le i\le n-2\} \\
            &\subset \{2^{n+1}+1\}\cup \{2^{n+1}+1\pm 2^i:\ 0\le i\le n-1\} \\
            &=\mathcal{A}_{n+1}.
        \end{align*}
        \item For $u'v'\in E(Q_n')$, we know $f_n(u)+f_n(v)\in \{2^n+1\}\cup \{2^n+1\pm 2^i:\ 0\le i\le n-2\}$, so
        \begin{align*}
            f_{n+1}(u')+f_{n+1}(v')&=-(f_n(u)+f_n(v))+3\cdot 2^n+2 \\
            &=2^{n+1}+1+2^n+1-(f_n(u)+f_n(v)) \\
            &\in \{2^{n+1}+1\}\cup \{2^{n+1}+1\pm 2^i:\ 0\le i\le n-2\} \\
            &\subset \{2^{n+1}+1\}\cup \{2^{n+1}+1\pm 2^i:\ 0\le i\le n-1\} \\
            &=\mathcal{A}_{n+1}.
        \end{align*}
        \item For $vv'\in E(Q_{n+1})$ with $v\in Q_n$ and $v'\in Q_n'$: 
        
        If $v$ falls into (A), then
        \begin{align*}
            f_{n+1}(v)+f_{n+1}(v')&=3\cdot 2^{n-1}+1 \\
            &= 2^{n+1}+1-2^{n-1},
        \end{align*}
        which gives us the smallest element in $\mathcal{A}_{n+1}$.

        If $v$ falls into (B), then
        \begin{align*}
            f_{n+1}(v)+f_{n+1}(v')&=2^n+3\cdot 2^{n-1}+1 \\
            &= 2^{n+1}+1+2^{n-1},
        \end{align*}
        which gives us the largest element in $\mathcal{A}_{n+1}$.
    \end{itemize}

    It is easy to check that under $f_{n+1}$, each edge has one vertex with rank $\le 2^n$, the other vertex with rank $>2^n$, so the same conclusion holds for $Q_{n+1}$.

    So $S(Q_n)\le 2n-1$, and thus $S(Q_n)=2n-1$.  
\end{proof}

\subsection{Cluster graphs}
The last topic in this section is on the cluster graphs with equal-size clusters.

The cluster graph with $n$ clusters each of size $k$, denoted by $nK_k$, is the disjoint union of $n$ complete graphs each with $k$ vertices. For $k=2$, we can easily see that $S(nK_2)=1$. For $k=3$, the sum indices of $nK_3$ were determined by Harrington et al. in \cite{HHKRW}.

\begin{theorem}[Harrington et al. \cite{HHKRW}]
    We have $S(nK_3)=s$, where $s$ is the positive integer such that
    \begin{align*}
        {s-1 \choose 3}<n\le {s\choose 3}.
    \end{align*}
\end{theorem}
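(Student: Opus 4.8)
The plan is to reinterpret the sum index of $nK_3$ as a purely additive counting problem and then attack the two bounds separately. Fix any rank assignment $f$, and let $\sigma$ be its set of rank sums, with $t=|\sigma|$. Each triangle consists of three vertices whose ranks $x<y<z$ are distinct integers, so its three edge sums $x+y<x+z<y+z$ are three \emph{distinct} elements of $\sigma$; thus every triangle determines a $3$-element subset of $\sigma$. The first thing I would verify is that this assignment is injective: from an ordered triple of sums $p<q<r$ one recovers $x=(p+q-r)/2$, $y=(p-q+r)/2$, $z=(-p+q+r)/2$, so two triangles producing the same $3$-subset of $\sigma$ must be identical. Hence the $n$ triangles inject into the $3$-subsets of $\sigma$, giving $n\le {t\choose 3}$.

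The lower bound then follows immediately. If some assignment achieved $t=S(nK_3)\le s-1$ sums, then $n\le {t\choose 3}\le {s-1\choose 3}$, contradicting the hypothesis $n>{s-1\choose 3}$. Therefore $S(nK_3)\ge s$.

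For the matching upper bound I would exhibit an assignment using at most $s$ sums that accommodates all $n\le {s\choose 3}$ triangles. The idea is to prescribe the sum set as $\sigma=\{2\cdot 3^i:\ 1\le i\le s\}$ and to realize one triangle for each $3$-subset $\{i<j<k\}$ by inverting the formula above: assign to that triangle the ranks $3^i+3^j-3^k$, $3^i-3^j+3^k$, and $-3^i+3^j+3^k$. A short check gives edge sums $2\cdot 3^i,\ 2\cdot 3^j,\ 2\cdot 3^k\in\sigma$, and the ranks are genuine integers. Selecting any $n$ of these ${s\choose 3}$ triangles then uses at most $s$ distinct sums, whence $S(nK_3)\le s$, and combined with the lower bound this yields the claimed equality.

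The step I expect to be the crux is showing that the $3{s\choose 3}$ ranks produced by this construction are pairwise distinct, which is exactly what is needed for them to serve as the labels of the $3n$ disjoint vertices. Here I would appeal to the uniqueness of balanced base-$3$ representations: each rank has the form $\varepsilon_i 3^i+\varepsilon_j 3^j+\varepsilon_k 3^k$ with digits $\varepsilon\in\{-1,0,1\}$, having exactly three nonzero digits, precisely one of which equals $-1$. Since such a representation uniquely encodes both the positions $\{i,j,k\}$ and the location of the negative digit, equal ranks force identical triangles at identical vertices. This simultaneously guarantees distinctness within each triangle and across different triangles, and with it both the construction and the theorem go through.
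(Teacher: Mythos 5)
Your proof is correct. Note that the paper does not actually prove this statement --- it quotes it from Harrington et al.\ \cite{HHKRW} --- so the natural comparison is with the paper's own proof of the analogous result for $nK_4$. Your lower bound is the same counting argument used there: each clique determines a $3$-subset of the sum set, and the linear system $p=x+y$, $q=x+z$, $r=y+z$ is invertible, so distinct triangles give distinct $3$-subsets and $n\le\binom{t}{3}$ (in the $K_3$ case you do not need the split of the sums into lower and upper halves that the $K_4$ argument requires). Your upper bound, however, takes a genuinely different and cleaner route. The paper's construction for $nK_4$ first works over $\R$ with square roots of primes (using their linear independence over $\Q$ to force injectivity) and then scales by a large multiplier and rounds to recover integer ranks. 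You instead prescribe the sum set $\{2\cdot 3^i\}$ directly and verify injectivity of the resulting ranks via uniqueness of balanced ternary representations: each rank has exactly three nonzero digits, exactly one equal to $-1$, so the digit string recovers both the triangle and the vertex. This stays entirely inside $\Z$, avoids the approximation step, and is arguably the more transparent argument; the price is that the balanced-ternary trick exploits the special structure of the $K_3$ case (one sign pattern per vertex), whereas the paper's real-number-then-round technique generalizes more readily, e.g.\ to $nK_4$ where the parity of $s$ forces the two-sided $L/U$ construction.
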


We determine the sum indices of $nK_4$. The idea of the proof comes from Theorem 6 in \cite{KS}, a paper by Kittipassorn and Sumalroj on multithreshold graphs.

\begin{theorem}
    We have $S(nK_4)=s$, where $s$ is the positive integer such that
    \begin{align*}
        {\lfloor\frac{s-1}{2}\rfloor\choose 3}+{\lceil\frac{s-1}{2}\rceil\choose 3}<n\le {\lfloor \frac{s}{2}\rfloor\choose 3}+{\lceil \frac{s}{2}\rceil\choose 3}.
    \end{align*}
\end{theorem}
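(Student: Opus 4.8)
The plan is to reformulate the statement as a packing problem and to bound the packing number from both sides. For a positive integer $s$, let $F(s)$ denote the maximum number of vertex-disjoint copies of $K_4$ that can be labeled by distinct integers so that the union of all the within-copy pairwise sums has size at most $s$; since $F$ is nondecreasing, $S(nK_4)=s$ holds exactly when $F(s-1)<n\le F(s)$. Writing $g(s):=\binom{\lfloor s/2\rfloor}{3}+\binom{\lceil s/2\rceil}{3}$, it therefore suffices to prove $F(s)=g(s)$. The one structural fact I will use repeatedly is elementary: for a triangle with labels $a<b<c$ the three sums $a+b<a+c<b+c$ determine $a,b,c$ uniquely (solve the linear system), so distinct vertex-disjoint triangles always produce distinct unordered triples of sums.

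For the upper bound $F(s)\le g(s)$, fix any valid labeling realizing $n$ copies with sum set $T$, $|T|=s$, and choose a half-integer threshold $\theta$ lying strictly between the $\lfloor s/2\rfloor$-th and $(\lfloor s/2\rfloor+1)$-th smallest elements of $T$, so exactly $\lfloor s/2\rfloor$ sums fall below $\theta$ and $\lceil s/2\rceil$ above. For a copy $Q$ with labels $a<b<c<d$, classify $Q$ as type-L if $b+c<\theta$ and type-U if $b+c>\theta$ (no sum equals $\theta$, since $b+c\in T$ is an integer). If $Q$ is type-L, then its lower triangle $\{a,b,c\}$ has all three sums $a+b<a+c<b+c$ below $\theta$, and I send $Q$ to the triple $\{a+b,a+c,b+c\}$, a $3$-subset of the $\lfloor s/2\rfloor$ low sums; if $Q$ is type-U, then its upper triangle $\{b,c,d\}$ has all three sums $b+c<b+d<c+d$ above $\theta$, and I send $Q$ to $\{b+c,b+d,c+d\}$, a $3$-subset of the $\lceil s/2\rceil$ high sums. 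Because the copies are vertex-disjoint, the triangle-uniqueness fact makes each of these two maps injective on its class, giving at most $\binom{\lfloor s/2\rfloor}{3}$ type-L copies and at most $\binom{\lceil s/2\rceil}{3}$ type-U copies, hence $n\le g(s)$. Note that for each admissible $\theta$ one gets $n\le\binom{\#\mathrm{low}}{3}+\binom{\#\mathrm{high}}{3}$; sliding $\theta$ to the median (balancing the halves) minimizes this convex expression and produces exactly $g(s)$.

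For the lower bound $F(s)\ge g(s)$, I will exhibit a labeling meeting the bound, built symmetrically about a common center $\mu$. Take magnitudes $y_1<\dots<y_m$ with $m=\lfloor s/2\rfloor$ and use the sum values $\mu\pm y_i$, together with the central value $\mu$ itself when $s$ is odd. A $K_4$ centered at $\mu$ is recovered from a choice of three magnitudes $x_1>x_2>x_3$ together with one of two reflection orientations, via the explicit formulas $a=(\mu-x_1-x_2-x_3)/2$, $b=(\mu-x_1+x_2+x_3)/2$, $c=(\mu+x_1-x_2+x_3)/2$, $d=(\mu+x_1+x_2-x_3)/2$ and its mirror image. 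Each unordered triple of magnitudes yields two copies (one with $b+c>\mu$, its reflection with $b+c<\mu$), giving $2\binom{m}{3}$ copies; when $s$ is odd, each pair of magnitudes together with the central value $\mu$ gives one additional balanced copy ($x_3=0$), giving $\binom{m}{2}$ more. The total is $2\binom{m}{3}=g(2m)$ when $s=2m$, and $2\binom{m}{3}+\binom{m}{2}=\binom{m}{3}+\binom{m+1}{3}=g(2m+1)$ when $s=2m+1$, while by construction only the $s$ prescribed sum values occur.

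The main obstacle is the bookkeeping in the lower bound: I must choose the magnitudes $y_i$ (for instance super-increasing, such as powers of a large base, with a parity adjustment so that every $a,b,c,d$ is an integer) to guarantee that the $4g(s)$ vertex labels produced across all triples, orientations, and balanced copies are pairwise distinct, so that the copies are genuinely vertex-disjoint; this is routine but must be verified carefully. The upper bound, by contrast, is clean once one observes that vertex-disjointness upgrades the elementary triangle map into an injection, and that the freedom to place the threshold $\theta$ at the median is precisely what turns the crude count into the claimed value $g(s)$.
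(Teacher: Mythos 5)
Your proposal is correct and its combinatorial skeleton is the same as the paper's: the bound $n\le\binom{\lfloor t/2\rfloor}{3}+\binom{\lceil t/2\rceil}{3}$ comes from exactly the paper's observation that the median sum $b+c$ of a copy labeled $a<b<c<d$ forces either the lower triangle's three sums all below the threshold or the upper triangle's all above it, together with the fact that a triangle's three sums determine its three labels; and your construction (sum values placed symmetrically about a center $\mu$, two mirror-image copies for each $3$-set of magnitudes, plus $\binom{m}{2}$ balanced copies through the central value when $s$ is odd, with vertex labels recovered linearly from the sums) is the paper's construction in a different parametrization. The one point where you genuinely depart from the paper is the certification that all vertex labels are distinct integers: the paper takes the magnitudes to be square roots of distinct primes, uses their linear independence over $\Q$ to get injectivity over $\R$, and then scales by a large $M$ and rounds, with careful error bookkeeping; you instead propose super-increasing even integer magnitudes such as $y_i=2B^i$. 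Your route is cleaner and does work --- every label has the form $\tfrac{1}{2}(\mu+\epsilon_1y_{i_1}+\epsilon_2y_{i_2}+\epsilon_3y_{i_3})$ with $\epsilon_k\in\{-1,0,+1\}$, and for $B\ge 3$ signed base-$B$ expansions with digits in $\{-1,0,1\}$ are unique, so distinct coefficient vectors give distinct labels --- but this is the one step you explicitly leave unverified, and you should write out that two-line uniqueness argument (and check it also separates the three-magnitude labels from the two-magnitude labels of the balanced copies) to make the construction complete.
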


\begin{proof}
    Assume that $s$ is the positive integer such that ${\lfloor\frac{s-1}{2}\rfloor\choose 3}+{\lceil\frac{s-1}{2}\rceil\choose 3}<n\le {\lfloor \frac{s}{2}\rfloor\choose 3}+{\lceil \frac{s}{2}\rceil\choose 3}$. 
    
    First, we show $S(nK_4)\ge s$. For a rank assignment $f:V\lhook\joinrel\longrightarrow \Z$, assume we have $t$ different edge rank sums under $f$, which are $k_1<k_2<...<k_t$. Let $L=\{k_1,\ k_2,\ ...,\ k_{\lfloor \frac{t}{2}\rfloor}\}$ and $U=\{k_{\lfloor \frac{t}{2}\rfloor+1},\ k_{\lfloor \frac{t}{2}\rfloor+2},\ ...,\ k_t\}$. We have $|L|=\bigl\lfloor \frac{t}{2}\bigr\rfloor$ and $|U|=\bigl\lceil \frac{t}{2}\bigr\rceil$.

    We claim that there is a triangle in each $K_4$ with its three edges all having rank sums in $L$ or all having rank sums in $U$. This is because, for a $K_4$ with vertices $v_1$, $v_2$, $v_3$, and $v_4$, we may without loss of generality assume $f(v_1)<f(v_2)<f(v_3)<f(v_4)$. Then we have
        \begin{align*}
            f(v_1)+f(v_2)<f(v_1)+f(v_3)<f(v_2)+f(v_3)<f(v_2)+f(v_4)<f(v_3)+f(v_4).
        \end{align*}
        If $f(v_2)+f(v_3)$ is in $L$, then the three edges in triangle $v_1 v_2 v_3$ all have rank sums in $L$. If $f(v_2)+f(v_3)$ is in $U$, then the three edges in triangle $v_2 v_3 v_4$ all have rank sums in $U$.

    By this claim, we must have 
    \begin{align*}
        {\lfloor \frac{t}{2}\rfloor\choose 3}+{\lceil \frac{t}{2}\rceil\choose 3}\ge n,
    \end{align*}
    because otherwise we have two triangles with the same set of edge rank sums, which means they have the same set of vertex ranks, contradicting the injectivity of $f$. So $t\ge s$, and because this argument works for any rank assignment $f$, we have $S(nK_4)\ge s$.

    Next, we show $S(nK_4)\le s$ by construction.

    As shown in Figure \ref{K4}, if a $K_4$ has edge rank sums $a_1,\ a_2,\ a_3$ and $b_1,\ b_2,\ b_3$ under rank assignment $f$, then we must have 
    \begin{align*}
        a_1+b_1=a_2+b_2=a_3+b_3=f(v_1)+f(v_2)+f(v_3)+f(v_4).
    \end{align*}

    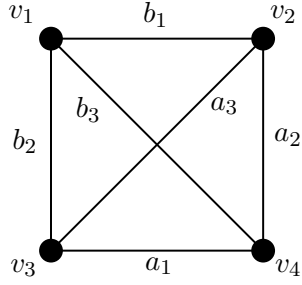
\begin{figure}[H]
        \tikzset{every picture/.style={line width=0.75pt}} 

\begin{tikzpicture}[x=0.75pt,y=0.75pt,yscale=-1,xscale=1]

\draw  [fill={rgb, 255:red, 0; green, 0; blue, 0 }  ,fill opacity=1 ] (284.5,74) .. controls (284.5,70.96) and (286.96,68.5) .. (290,68.5) .. controls (293.04,68.5) and (295.5,70.96) .. (295.5,74) .. controls (295.5,77.04) and (293.04,79.5) .. (290,79.5) .. controls (286.96,79.5) and (284.5,77.04) .. (284.5,74) -- cycle ;
\draw  [fill={rgb, 255:red, 0; green, 0; blue, 0 }  ,fill opacity=1 ] (391.5,74) .. controls (391.5,70.96) and (393.96,68.5) .. (397,68.5) .. controls (400.04,68.5) and (402.5,70.96) .. (402.5,74) .. controls (402.5,77.04) and (400.04,79.5) .. (397,79.5) .. controls (393.96,79.5) and (391.5,77.04) .. (391.5,74) -- cycle ;
\draw  [fill={rgb, 255:red, 0; green, 0; blue, 0 }  ,fill opacity=1 ] (284.5,181) .. controls (284.5,177.96) and (286.96,175.5) .. (290,175.5) .. controls (293.04,175.5) and (295.5,177.96) .. (295.5,181) .. controls (295.5,184.04) and (293.04,186.5) .. (290,186.5) .. controls (286.96,186.5) and (284.5,184.04) .. (284.5,181) -- cycle ;
\draw  [fill={rgb, 255:red, 0; green, 0; blue, 0 }  ,fill opacity=1 ] (391.5,181) .. controls (391.5,177.96) and (393.96,175.5) .. (397,175.5) .. controls (400.04,175.5) and (402.5,177.96) .. (402.5,181) .. controls (402.5,184.04) and (400.04,186.5) .. (397,186.5) .. controls (393.96,186.5) and (391.5,184.04) .. (391.5,181) -- cycle ;
\draw   (290,74) -- (397,74) -- (397,181) -- (290,181) -- cycle ;
\draw    (290,74) -- (397,181) ;
\draw    (290,181) -- (397,74) ;

\draw (336,183) node [anchor=north west][inner sep=0.75pt]   [align=left] {$\displaystyle a_{1}$};
\draw (401,118) node [anchor=north west][inner sep=0.75pt]   [align=left] {$\displaystyle a_{2}$};
\draw (369,102) node [anchor=north west][inner sep=0.75pt]   [align=left] {$\displaystyle a_{3}$};
\draw (335,54) node [anchor=north west][inner sep=0.75pt]   [align=left] {$\displaystyle b_{1}$};
\draw (269,118) node [anchor=north west][inner sep=0.75pt]   [align=left] {$\displaystyle b_{2}$};
\draw (301,102) node [anchor=north west][inner sep=0.75pt]   [align=left] {$\displaystyle b_{3}$};
\draw (267,56) node [anchor=north west][inner sep=0.75pt]   [align=left] {$\displaystyle v_{1}$};
\draw (399,56) node [anchor=north west][inner sep=0.75pt]   [align=left] {$\displaystyle v_{2}$};
\draw (268,184) node [anchor=north west][inner sep=0.75pt]   [align=left] {$\displaystyle v_{3}$};
\draw (401.5,184) node [anchor=north west][inner sep=0.75pt]   [align=left] {$\displaystyle v_{4}$};

\end{tikzpicture}
\caption{Edge rank sums in $K_4$.}
\label{K4}
    \end{figure}

    Thus,
    \begin{align*}
        f(v_1)&=\frac{b_1+b_2-a_3}{2}; \\
        f(v_2)&=\frac{a_2+a_3-a_1}{2}; \\
        f(v_3)&=\frac{a_1+a_3-a_2}{2}; \\
        f(v_4)&=\frac{a_1+a_2-a_3}{2}.
    \end{align*}

    Depending on the parity of $s$, we have two cases.

    \textbf{Case 1.} $s$ is even.

    For $1\le i\le \frac{s}{2}$, let $\alpha_i=\sqrt{p_i}$, where $p_i$ is the $i$-th prime number. Let $\mathsf{p}=\sqrt{p_{\frac{s}{2}+1}}$, the square root of the $(\frac{s}{2}+1)$-th prime number. For $1\le i\le \frac{s}{2}$, let $\beta_i=2\mathsf{p}-\alpha_i$. Let $L=\{\alpha_1,\ \alpha_2,\ ...,\ \alpha_{\frac{s}{2}}\}$ and $U=\{\beta_1,\ \beta_2,\ ...,\ \beta_{\frac{s}{2}}\}$. Assume that the smallest distance between two numbers in $L\cup U$ is $\delta_1>0$.
    
    If we are allowed to assign real numbers to be the ranks the vertices, then we can choose $\{a_1,\ a_2,\ a_3\}$ to be a subset of $L$ or a subset of $U$, and for chosen $\{a_1,\ a_2,\ a_3\}$, we let $b_1=2\mathsf{p}-a_1$, $b_2=2\mathsf{p}-a_2$, and $b_3=2\mathsf{p}-a_3$. So if $\{a_1,\ a_2,\ a_3\}\subseteq L$, then $\{b_1,\ b_2,\ b_3\}\subseteq U$; and if $\{a_1,\ a_2,\ a_3\}\subseteq U$, then $\{b_1,\ b_2,\ b_3\}\subseteq L$. In this way, we can assign edge rank sums for up to ${|L| \choose 3}+{|U| \choose 3}={\frac{s}{2} \choose 3}+{\frac{s}{2} \choose 3}$ $K_4$'s. Then, as explained above, when we know the edge rank sums of the six edges in a $K_4$, we automatically know the vertex ranks of the four vertices. It is easy to see that in a $K_4$, the four vertices have different ranks. By the linear independence of $\{\sqrt{p_i}:\ 1\le i\le \frac{s}{2}+1\}=\{\alpha_1,\ \alpha_2,\ ...,\ \alpha_{\frac{s}{2}},\ \mathsf{p}\}$ over $\Q$, we know that two vertices in different $K_4$'s have different ranks. So now we have an injective rank assignment $V\lhook\joinrel\longrightarrow \R$. Assume that under this rank assignment, the smallest distance between two ranks is $\delta_2>0$.

    However, we need to assign integers instead of real numbers. To do this, we find a multiplier $M$ such that $M\delta_1>10$ and $M\delta_2>10$. For $1\le i\le \frac{s}{2}$, let $\alpha'_i$ be the integer closest to $M\alpha_i$. Then
    \begin{align*}
        |\alpha'_i-M\alpha_i|<1,
    \end{align*}
    so
    \begin{align*}
        d_\alpha:=\max_{1\le i\le \frac{s}{2}}\{|\alpha'_i-M\alpha_i|\}<1.
    \end{align*}
    Let $\mathsf{p}'$ be the integer closest to $M\mathsf{p}$. Then 
    \begin{align*}
        d_\mathsf{p}:=|\mathsf{p}'-M\mathsf{p}|<1.
    \end{align*}
    For $1\le i\le \frac{s}{2}$, let $\beta'_i=2\mathsf{p}'-\alpha'_i$. Then
    \begin{align*}
        |\beta'_i-M\beta_i|&=|(2\mathsf{p}'-\alpha'_i)-M(2\mathsf{p}-\alpha_i)| \\
        &\le 2|\mathsf{p}'-M\mathsf{p}|+|\alpha'_i-M\alpha_i| \\
        &<3,
    \end{align*}
    so
    \begin{align*}
        d_\beta:=\max_{1\le i\le \frac{s}{2}}\{|\beta'_i-M\beta_i|\}<3.
    \end{align*}
    Let $L'=\{\alpha'_1,\ \alpha'_2,\ ...,\ \alpha'_{\frac{s}{2}}\}$ and $U'=\{\beta'_1,\ \beta'_2,\ ...,\ \beta'_{\frac{s}{2}}\}$. Now we know that $L'\cup U'$ is a set of $s$ distinct integers, as the distance between any two integers in $L'\cup U'$ is more than $M\delta_1-2\cdot\max\{d_\alpha,\ d_\beta\}>10-6=4$.

    We choose $\{a_1,\ a_2,\ a_3\}$ to be a subset of $L'$ or a subset of $U'$, and for chosen $\{a_1,\ a_2,\ a_3\}$, we let $b_1=2\mathsf{p}'-a_1$, $b_2=2\mathsf{p}'-a_2$, and $b_3=2\mathsf{p}'-a_3$. So if $\{a_1,\ a_2,\ a_3\}\subseteq L'$, then $\{b_1,\ b_2,\ b_3\}\subseteq U'$; and if $\{a_1,\ a_2,\ a_3\}\subseteq U'$, then $\{b_1,\ b_2,\ b_3\}\subseteq L'$. In this way, we can assign edge rank sums for up to ${\frac{s}{2} \choose 3}+{\frac{s}{2} \choose 3}$ $K_4$'s. As we already explained, these edge rank sums will give us the ranks of the vertices. We know that no two vertices get the same rank, because the distance between any two ranks is more than $M\delta_2-2\cdot \frac{\max\{d_\alpha,\ d_\beta\}+\max\{d_\alpha,\ d_\beta\}+\max\{d_\alpha,\ d_\beta\}}{2}>10-9=1$. So we get an injective rank assignment $V\lhook\joinrel\longrightarrow \Z$.

    \textbf{Case 2.} $s$ is odd.

    For $1\le i\le \lfloor \frac{s}{2} \rfloor$, let $\alpha_i=\sqrt{p_i}$, where $p_i$ is the $i$-th prime number. Let $\mathsf{p}=\sqrt{p_{\lfloor \frac{s}{2} \rfloor+1}}$, the square root of the $(\lfloor \frac{s}{2} \rfloor+1)$-th prime number. For $1\le i\le \lfloor \frac{s}{2} \rfloor$, let $\beta_i=2\mathsf{p}-\alpha_i$. Let $L=\{\alpha_1,\ \alpha_2,\ ...,\ \alpha_{\lfloor \frac{s}{2} \rfloor}\}$ and $U=\{\beta_1,\ \beta_2,\ ...,\ \beta_{\lfloor \frac{s}{2} \rfloor}\}$. Assume that the smallest distance between two numbers in $L\cup U\cup \{\mathsf{p}\}$ is $\delta_1>0$. If we are allowed to assign real numbers to be the ranks the vertices, then, in this case, we have three ways to assign edge rank sums:
    \begin{itemize}
        \item Let $\{a_1,\ a_2,\ a_3\}$ be a subset of $L$, and let $b_1=2\mathsf{p}-a_1$, $b_2=2\mathsf{p}-a_2$, and $b_3=2\mathsf{p}-a_3$, so $\{b_1,\ b_2,\ b_3\}\subseteq U$. In this way, we can assign edge rank sums to ${\lfloor \frac{s}{2}\rfloor \choose 3}$ $K_4$'s.
        \item Let $\{a_1,\ a_2,\ a_3\}$ be a subset of $U$, and let $b_1=2\mathsf{p}-a_1$, $b_2=2\mathsf{p}-a_2$, and $b_3=2\mathsf{p}-a_3$, so $\{b_1,\ b_2,\ b_3\}\subseteq L$. In this way, we can also assign edge rank sums to ${\lfloor \frac{s}{2}\rfloor \choose 3}$ $K_4$'s.
        \item Let $a_1=\mathsf{p}$ and let $\{a_2,\ a_3\}$ be a subset of $L$. Then let $b_1=2\mathsf{p}-a_1=\mathsf{p}$, $b_2=2\mathsf{p}-a_2$, and $b_3=2\mathsf{p}-a_3$. So $a_1=b_1$ and $\{b_2,\ b_3\}\subseteq U$. In this way, we can assign edge rank sums to ${\lfloor \frac{s}{2}\rfloor \choose 2}$ $K_4$'s.
    \end{itemize}
    
    In total, we can assign edge rank sums for up to ${\lfloor \frac{s}{2}\rfloor \choose 3}+{\lfloor \frac{s}{2}\rfloor \choose 3}+{\lfloor \frac{s}{2}\rfloor \choose 2}={\lfloor \frac{s}{2}\rfloor \choose 3}+{\lceil \frac{s}{2}\rceil \choose 3}$ $K_4$'s. When we know the edge rank sums of the six edges in a $K_4$, we automatically know the vertex ranks of the four vertices. It is easy to see that in a $K_4$, the four vertices have different ranks. By the linear independence of $\{\sqrt{p_i}:\ 1\le i\le \lfloor \frac{s}{2} \rfloor+1\}=\{\alpha_1,\ \alpha_2,\ ...,\ \alpha_{\lfloor \frac{s}{2} \rfloor},\ \mathsf{p}\}$ over $\Q$, we know that two vertices in different $K_4$'s have different ranks. So now we have an injective rank assignment $V\lhook\joinrel\longrightarrow \R$. Assume that under this rank assignment, the smallest distance between two ranks is $\delta_2>0$.

    We can find a multiplier $M$ such that $M\delta_1>10$ and $M\delta_2>10$. For $1\le i\le \lfloor \frac{s}{2} \rfloor$, let $\alpha'_i$ be the integer closest to $M\alpha_i$. Then
    \begin{align*}
        |\alpha'_i-M\alpha_i|<1,
    \end{align*}
    so
    \begin{align*}
        d_\alpha:=\max_{1\le i\le \lfloor \frac{s}{2} \rfloor}\{|\alpha'_i-M\alpha_i|\}<1.
    \end{align*}
    Let $\mathsf{p}'$ be the integer closest to $M\mathsf{p}$. Then 
    \begin{align*}
        d_\mathsf{p}:=|\mathsf{p}'-M\mathsf{p}|<1.
    \end{align*}
    For $1\le i\le \lfloor \frac{s}{2} \rfloor$, let $\beta'_i=2\mathsf{p}'-\alpha'_i$. Then 
    \begin{align*}
        |\beta'_i-M\beta_i|&=|(2\mathsf{p}'-\alpha'_i)-M(2\mathsf{p}-\alpha_i)| \\
        &\le 2|\mathsf{p}'-M\mathsf{p}|+|\alpha'_i-M\alpha_i| \\
        &<3,
    \end{align*}
    so
    \begin{align*}
        d_\beta:=\max_{1\le i\le \lfloor \frac{s}{2} \rfloor}\{|\beta'_i-M\beta_i|\}<3.
    \end{align*}
    
    Let $L'=\{\alpha'_1,\ \alpha'_2,\ ...,\ \alpha'_{\lfloor \frac{s}{2} \rfloor}\}$ and $U'=\{\beta'_1,\ \beta'_2,\ ...,\ \beta'_{\lfloor \frac{s}{2} \rfloor}\}$. Now we know that $L'\cup U'\cup \{\mathsf{p}'\}$ is a set of $s$ distinct integers, as the distance between any two integers in $L'\cup U'\cup \{\mathsf{p}'\}$ is more than $M\delta_1-2\cdot\max\{d_\alpha,\ d_\beta,\ d_\mathsf{p}\}>10-6=4$.

    We have three ways to assign edge rank sums:
    \begin{itemize}
        \item Let $\{a_1,\ a_2,\ a_3\}$ be a subset of $L'$, and let $b_1=2\mathsf{p}'-a_1$, $b_2=2\mathsf{p}'-a_2$, and $b_3=2\mathsf{p}'-a_3$, so $\{b_1,\ b_2,\ b_3\}\subseteq U'$. In this way, we can assign edge rank sums to ${\lfloor \frac{s}{2}\rfloor \choose 3}$ $K_4$'s.
        \item Let $\{a_1,\ a_2,\ a_3\}$ be a subset of $U'$, and let $b_1=2\mathsf{p}'-a_1$, $b_2=2\mathsf{p}'-a_2$, and $b_3=2\mathsf{p}'-a_3$, so $\{b_1,\ b_2,\ b_3\}\subseteq L'$. In this way, we can also assign edge rank sums to ${\lfloor \frac{s}{2}\rfloor \choose 3}$ $K_4$'s.
        \item Let $a_1=\mathsf{p}'$ and let $\{a_2,\ a_3\}$ be a subset of $L'$. Then let $b_1=2\mathsf{p}'-a_1=\mathsf{p}'$, $b_2=2\mathsf{p}'-a_2$, and $b_3=2\mathsf{p}'-a_3$. So $a_1=b_1$ and $\{b_2,\ b_3\}\subseteq U'$. In this way, we can assign edge rank sums to ${\lfloor \frac{s}{2}\rfloor \choose 2}$ $K_4$'s.
    \end{itemize}
    In total, we can assign edge rank sums for up to ${\lfloor \frac{s}{2}\rfloor \choose 3}+{\lfloor \frac{s}{2}\rfloor \choose 3}+{\lfloor \frac{s}{2}\rfloor \choose 2}={\lfloor \frac{s}{2}\rfloor \choose 3}+{\lceil \frac{s}{2}\rceil \choose 3}$ $K_4$'s. As we already explained, these edge rank sums will give us the ranks of the vertices. We know that no two vertices get the same rank, because the distance between any two ranks is more than $M\delta_2-2\cdot \frac{\max\{d_\alpha,\ d_\beta,\ d_\mathsf{p}\}+\max\{d_\alpha,\ d_\beta,\ d_\mathsf{p}\}+\max\{d_\alpha,\ d_\beta,\ d_\mathsf{p}\}}{2}>10-9=1$. So we get an injective rank assignment $V\lhook\joinrel\longrightarrow \Z$.
\end{proof}

\section{Maximum number of edges for a fixed sum index}
Given two positive integers $n$ and $N$, at most how many edges can we have in a graph $G$ with $n$ vertices and $S(G)=N$?

The case $N=1$ or $N=2$ was solved in \cite{HHKRW}.

If $N=1$, then it is easy to see that $\Delta(G)\le 1$ and $|E(G)|\le \lfloor \frac{n}{2}\rfloor$, and the equality is attained if and only if $G$ consists of $\lfloor \frac{n}{2}\rfloor$ disjoint copies of $P_2$.

If $N=2$, then $G$ cannot have $K_{1,3}$ or any cycle as a subgraph, because we know $S(K_{1,3})=3$ and $S(C_m)=3$ for any $m$ \cite{HHKRW}. Cycles being forbidden means $G$ is a forest, and $K_{1,3}$ being forbidden further implies $G$ is a linear forest. So $|E(G)|\le n-1$, and the equality is attained if and only if $G$ is a disjoint union of paths.

Now let us handle the case $N=3$.

\begin{theorem} \label{N3}
    Let $G$ be a graph with $n$ vertices and $S(G)=3$. Then
    \[ |E(G)|\le \begin{cases} 
          \frac{3}{2}n-2 & if\ n\ is\ even, \\
          \frac{3}{2}n-\frac{3}{2} & if\ n\ is\ odd,
       \end{cases}
    \]
    and the equality can be attained.
\end{theorem}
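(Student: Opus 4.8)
The plan is to turn the edge count into a purely additive question and then exploit a parity obstruction. I would fix an optimal labeling $f$ realizing $S(G)=3$, let its three rank sums be $a<b<c$, and set $R=f(V)\subseteq\Z$, so $|R|=n$. Every edge of $G$ is a pair of elements of $R$ summing to one of $a,b,c$, and distinct edges give distinct pairs; hence, writing $P(s)$ for the number of unordered pairs $\{x,y\}\subseteq R$ with $x\neq y$ and $x+y=s$, we obtain
\[
|E(G)|\le P(a)+P(b)+P(c).
\]
So it suffices to bound $P(a)+P(b)+P(c)$ for an arbitrary $n$-element set $R\subseteq\Z$ and arbitrary distinct targets $a,b,c$.

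The key device is to count, for each target $s$, the number $u_s$ of elements of $R$ \emph{not} covered by a sum-$s$ pair; that is, $u_s=n-2P(s)$, the number of $r\in R$ with $s-r\notin R$ or $s-r=r$. Summing over the three targets gives
\[
P(a)+P(b)+P(c)=\frac{3n-(u_a+u_b+u_c)}{2},
\]
so the theorem is equivalent to $u_a+u_b+u_c\ge 4$ when $n$ is even and $u_a+u_b+u_c\ge 3$ when $n$ is odd. Since the pairing $r\mapsto s-r$ covers an even number of elements of $R$, each $u_s\equiv n\pmod 2$.

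The only step requiring an idea is the even case; the odd case is immediate. When $n$ is odd each $u_s$ is odd, hence $u_s\ge 1$, and $u_a+u_b+u_c\ge 3$. When $n$ is even each $u_s$ is even, so it remains to rule out two of the three quantities vanishing at once. If $u_s=0$ then every $r\in R$ has $s-r\in R$, i.e.\ $R=s-R$; were this to hold for two distinct targets $s\neq s'$, then $R$ would be invariant under translation by $s'-s\neq 0$, which is impossible for a finite nonempty set. Thus at most one $u_s$ is zero, the other two are even and positive, and $u_a+u_b+u_c\ge 4$. I expect everything after this observation to be routine bookkeeping, the only care being the validity of the reduction and ignoring the vacuous small values of $n$ for which $S(G)=3$ is unattainable.

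Finally, for sharpness I would take $G$ on vertex set $\{1,2,\dots,n\}$ whose edges are exactly the pairs summing to $n$, $n+1$, or $n+2$. The identity labeling uses only these three sums, so $S(G)\le 3$. Counting the pairs summing to each of $n,n+1,n+2$ yields exactly $\tfrac32 n-2$ edges when $n$ is even and $\tfrac32 n-\tfrac32$ edges when $n$ is odd; in the relevant ranges this is at least $n$, so $G$ contains a cycle $C_m$ and therefore $S(G)\ge S(C_m)=3$, giving $S(G)=3$ and matching the upper bound.
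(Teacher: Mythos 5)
Your proof is correct, and it takes a genuinely different route from the paper's. The paper argues by degree sequences: it rules out $|E(G)|=\frac{3}{2}n$ and $\frac{3}{2}n-\frac{1}{2}$ using the maximum-degree bound and Haslegrave's lower bound $S(G)\ge \delta_k+\delta_{k+1}-k$, and then eliminates $|E(G)|=\frac{3}{2}n-1$ for even $n$ by a case analysis in which the edges of a fixed rank sum would have to form a perfect matching of an odd-order subgraph (or two distinct sums would both yield perfect matchings, forcing $s_1=s_2$ by summing all labels). Your argument replaces all of this with a single counting identity: bounding $|E(G)|$ by the number of label pairs realizing each of the three sums, and observing that the uncovered counts $u_s=n-2P(s)$ all have the parity of $n$ and that at most one can vanish (your translation-invariance step is the additive reformulation of the paper's Subcase II.i). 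This is shorter, avoids Theorem \ref{lb} entirely, and is much more in the additive-combinatorics spirit the paper advertises in its introduction; it also generalizes immediately to a bound of the form $|E(G)|\le\frac{Nn-c}{2}$ for arbitrary sum index $N$ (though that is weaker than the paper's Theorem \ref{ubeg} for large $N$, since it does not exploit the interaction between different sums beyond the pairwise translation argument). What the paper's longer route buys is structural information about near-extremal graphs (which degree sequences are possible). Your extremal example --- all pairs of $\{1,\dots,n\}$ summing to $n$, $n+1$, or $n+2$ --- is exactly the paper's general construction from Theorem \ref{lbeg} specialized to $N=3$, and your verification that $S(G)\ge 3$ via a contained cycle is sound.
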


\begin{proof}
    First, we note that $S(G)=3$ implies $n\ge 3$.

    It is not possible that $|E(G)|>\frac{3}{2}n$, because otherwise $S(G)\ge \Delta(G)\ge 4$, a contradiction. Also, if $n$ is even, then it is not possible that $|E(G)|=\frac{3}{2}n$. This is because $|E(G)|=\frac{3}{2}n$ means either $S(G)\ge \Delta(G)\ge 4$, a contradiction; or $G$ is a 3-regular graph, which together with Theorem \ref{lb} gives us a contradiction.

    Then for even $n$, we show that $|E(G)|\neq \frac{3}{2}n-1$. If there is a vertex with degree at least four, then $S(G)\ge \Delta(G)\ge 4$, a contradiction. So there are two possible cases left.

    \textbf{Case I.} There is one vertex with degree one, and $n-1$ vertices with degree three.
    
    Let us assume $deg(v)=1$, the three rank sums are $s_1,s_2,s_3$, and the only edge on $v$ has rank sum $s_1$. Then any vertex other than $v$ has an edge on it with rank sum $s_2$, and an edge on it with rank sum $s_3$. So the edges with rank sum $s_2$ form a perfect matching of the induced subgraph on $V(G)\setminus \{v\}$, and it is the same for $s_3$. But the induced subgraph on $V(G)\setminus \{v\}$ has an odd number of vertices, so it does not have a perfect matching, a contradiction.

    \textbf{Case II.} There are two vertices with degree two, and $n-2$ vertices with degree three.

    Let us assume $v_1$ and $v_2$ are the two vertices with degree two, and the rank sums are $s_1,s_2,s_3$.

    \textbf{Subcase II.i.} The rank sums of the two edges on $v_1$ and the rank sums of the two edges on $v_2$ are the same.

    In this subcase, we may assume that under the rank assignment $f:V(G)\lhook\joinrel\longrightarrow \Z$, the two edges on $v_1$ have rank sums $s_1$ and $s_2$, and so do the two edges on $v_2$. Then the edges with rank sum $s_1$ form a perfect matching of $G$, and so do the edges with rank sum $s_2$. But this means $\frac{s_1|V(G)|}{2}=\sum_{v\in V(G)}f(v)=\frac{s_2|V(G)|}{2}$, which implies $s_1=s_2$, a contradiction.

    \textbf{Subcase II.ii.} The rank sums of the two edges on $v_1$ and the rank sums of the two edges on $v_2$ are different.

    In this subcase, we may assume the two edges on $v_1$ have rank sums $s_1$ and $s_2$, and the two edges on $v_2$ have rank sums $s_1$ and $s_3$. Then the edges with rank sum $s_2$ form a perfect matching of the induced subgraph on $V(G)\setminus \{v_2\}$, but this induced subgraph has an odd number of vertices, making it impossible.

    For even $n$, we have proved that $|E(G)|\neq \frac{3}{2}n-1$, so $|E(G)|\le \frac{3}{2}n-2$. The equality can be attained by ladders, as shown in Figure \ref{ladder}, where using the rank assignment marked in blue, we have three different rank sums $-1$, $0$, and $1$. 
    
    \begin{figure}[H]  
        \input ladder.tex
    \end{figure}

    For odd $n$, we show that $|E(G)|\neq \frac{3}{2}n-\frac{1}{2}$. If there is a vertex with degree at least four, then $S(G)\ge \Delta(G)\ge 4$, a contradiction. Then there must be one vertex with degree two and $n-1$ vertices with degree three. By Theorem \ref{lb}, we know $S(G)\ge \delta_1(G)+\delta_2(G)-1\ge 4$, a contradiction. Thus, for odd $n$, we have $|E(G)|\le \frac{3}{2}n-\frac{3}{2}$. The equality can be attained as shown in Figure \ref{hattedladder}, where we add another vertex to a ladder.

    \begin{figure}[H]     
        \input hattedladder.tex
    \end{figure}
\end{proof}

The rank assignment for ladders in this proof also has applications in other problems. For example, it is used in exactly the same way in \cite{Wa} to minimize the number of edge rank sums.

For the general case, assuming the sum index of a graph $G$ is $N$, we first study the upper bound on $|E(G)|$.

We have an upper bound given by Tur\'an's theorem.

\begin{theorem}[Tur\'an \cite{Tu}] \label{turan}
    Let $G$ be a graph with $n$ vertices. If $G$ does not have $K_{r+1}$ as a subgraph, then
    \begin{align*}
        |E(G)|\le \biggl(1-\frac{1}{r}\biggr)\frac{n^2}{2}.
    \end{align*}
\end{theorem}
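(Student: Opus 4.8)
The plan is to prove Tur\'an's theorem by induction on the number of vertices $n$, after first reducing to the extremal case. Since the statement only bounds the maximum possible number of edges, I would fix $r$ and take $G$ to be a $K_{r+1}$-free graph on $n$ vertices with the \emph{largest} number of edges; it then suffices to bound $|E(G)|$ for such a $G$. For the base case $n\le r$, the complete graph $K_n$ is itself $K_{r+1}$-free, and a direct check shows $\binom{n}{2}\le \left(1-\frac{1}{r}\right)\frac{n^2}{2}$ exactly when $n\le r$ (it rearranges to $n\le r$), so the bound holds there.

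The key structural step is to show that an edge-maximal $K_{r+1}$-free graph on $n>r$ vertices must contain a copy of $K_r$. If $G$ were complete it would contain $K_{r+1}$ since $n>r$, so $G$ has a non-edge $uv$; by edge-maximality, adding $uv$ creates a copy of $K_{r+1}$, and deleting $v$ from that copy leaves a $K_r$ inside $G$. I would fix such a set $A$ of $r$ vertices spanning a $K_r$ and set $B=V(G)\setminus A$, so that $|B|=n-r$.

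I would then count the edges of $G$ in three groups. The edges inside $A$ number exactly $\binom{r}{2}$. Each vertex of $B$ has at most $r-1$ neighbors in $A$, since a vertex adjacent to all of $A$ would complete a $K_{r+1}$; this gives at most $(n-r)(r-1)$ edges between $A$ and $B$. Finally, $G[B]$ is again $K_{r+1}$-free on $n-r$ vertices, so the induction hypothesis bounds the edges inside $B$ by $\left(1-\frac{1}{r}\right)\frac{(n-r)^2}{2}$. Summing the three contributions and factoring out $r-1$, the bracketed factor, after multiplying through by $2r$, collapses to $r^2+2r(n-r)+(n-r)^2=\bigl(r+(n-r)\bigr)^2=n^2$, which returns exactly $\left(1-\frac{1}{r}\right)\frac{n^2}{2}$ and closes the induction.

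The main obstacle is not the arithmetic, which simplifies cleanly via the identity $\bigl(r+(n-r)\bigr)^2=n^2$, but the structural reduction: one must justify passing to an edge-maximal graph and extracting the anchoring $K_r$. An alternative route that sidesteps this induction is Zykov symmetrization, in which one repeatedly replaces a vertex by a duplicate of a non-adjacent vertex of higher degree to show that some extremal graph is complete multipartite, and then optimizes the part sizes; I would keep this approach in reserve if a full extremal characterization, rather than merely the bound, is desired.
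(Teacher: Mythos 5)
The paper does not prove this statement: Theorem \ref{turan} is quoted as a classical result of Tur\'an and used as a black box (in Proposition \ref{turanbound}), so there is no in-paper proof to compare against. Judged on its own, your induction is correct and complete for the form stated here (the bound $\bigl(1-\frac{1}{r}\bigr)\frac{n^2}{2}$, which is weaker than the exact Tur\'an-graph count and therefore does not require the symmetrization machinery you hold in reserve). The base case computation $\binom{n}{2}\le\bigl(1-\frac{1}{r}\bigr)\frac{n^2}{2}\iff n\le r$ is right; the reduction to an edge-maximal graph is legitimate since a maximizer exists on finitely many graphs and any $K_{r+1}$-free graph has at most as many edges; the extraction of a $K_r$ from the $K_{r+1}$ created by adding a non-edge is sound because that copy must use the new edge and hence contains $v$; and the three-part count
\begin{align*}
\binom{r}{2}+(n-r)(r-1)+\Bigl(1-\frac{1}{r}\Bigr)\frac{(n-r)^2}{2}
=\Bigl(1-\frac{1}{r}\Bigr)\frac{r^2+2r(n-r)+(n-r)^2}{2}
=\Bigl(1-\frac{1}{r}\Bigr)\frac{n^2}{2}
\end{align*}
closes the induction exactly as you say. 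This is a standard and perfectly acceptable proof of the quantitative form the paper actually uses.
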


It is showed in \cite{HW} that $S(K_{r+1})=2r-1$, so to make sure that $S(G)$ does not exceed $N$, we need to forbid $K_{\lceil \frac{N}{2}\rceil+2}$ to be a subgraph of $G$. So we take $r=\bigl\lceil \frac{N}{2}\bigr\rceil+1$ in Theorem \ref{turan}, and we have $|E(G)|\le (1-\frac{1}{\lceil \frac{N}{2}\rceil+1})\frac{n^2}{2}$.

\begin{proposition} \label{turanbound}
    Let $G$ be a graph with $n$ vertices and $S(G)=N$. Then 
    \begin{align*}
        |E(G)|\le \biggl(1-\frac{1}{\bigl\lceil \frac{N}{2}\bigr\rceil+1}\biggr)\frac{n^2}{2}.
    \end{align*}
\end{proposition}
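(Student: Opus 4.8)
The plan is to recast the hypothesis $S(G)=N$ as a forbidden-complete-subgraph condition and then apply Tur\'an's theorem verbatim. The engine is the monotonicity of the sum index under subgraphs noted earlier: if $G'$ is a subgraph of $G$, then $S(G')\le S(G)$. Paired with Theorem \ref{complete}, which gives $S(K_m)=2m-3$, this immediately yields that whenever $G$ contains a copy of $K_m$, we have $S(G)\ge 2m-3$.

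First I would pin down the largest clique that $G$ can possibly contain. Since $S(G)=N$, a subgraph $K_m$ with $2m-3>N$ would force $S(G)\ge 2m-3>N$, a contradiction; hence $G$ is $K_m$-free for the smallest $m$ with $2m-3>N$, i.e.\ the smallest integer $m>\frac{N+3}{2}$. Checking the two parities of $N$ (when $N$ is even, $\frac{N+3}{2}$ is a half-integer and the smallest such $m$ is $\frac N2+2$; when $N$ is odd, $\frac{N+3}{2}$ is an integer and the smallest such $m$ is $\frac{N+5}{2}$) shows that in both cases this value equals $\lceil N/2\rceil+2$. Thus $G$ contains no $K_{\lceil N/2\rceil+2}$.

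Finally I would invoke Theorem \ref{turan} with $r+1=\lceil N/2\rceil+2$, that is $r=\lceil N/2\rceil+1$, to conclude
\begin{align*}
    |E(G)|\le\biggl(1-\frac{1}{r}\biggr)\frac{n^2}{2}=\biggl(1-\frac{1}{\bigl\lceil\frac N2\bigr\rceil+1}\biggr)\frac{n^2}{2}.
\end{align*}
There is no substantive obstacle here, as every ingredient is already available; the only point needing a moment's care is the parity bookkeeping in the second step, which identifies the forbidden clique as $K_{\lceil N/2\rceil+2}$ rather than something off by one.
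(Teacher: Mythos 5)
Your proposal is correct and follows essentially the same route as the paper: the paper likewise observes that a copy of $K_{r+1}$ forces $S(G)\ge 2r-1$, deduces that $G$ must be $K_{\lceil N/2\rceil+2}$-free, and applies Theorem \ref{turan} with $r=\lceil N/2\rceil+1$. Your parity check confirming that the smallest forbidden clique size is $\lceil N/2\rceil+2$ is accurate, so nothing further is needed.
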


Using Theorem \ref{lb}, we give another upper bound on $|E(G)|$.

\begin{theorem} \label{ubeg}
    Let $G$ be a graph with $n$ vertices and $S(G)=N$. Then
    \[ |E(G)|\le \begin{cases} 
          \Bigl\lfloor \frac{Nn}{2}-\frac{N^2}{8}-\frac{N}{8}+\frac{\lfloor\frac{N+1}{2}\rfloor}{4}\Bigr\rfloor\approx \frac{Nn}{2}-\frac{N^2}{8} & if\ N\le n-1, \\
          \Bigl\lfloor \frac{Nn}{4}+\frac{n^2}{8}-\frac{N}{4}+\frac{n}{8}+\frac{\lfloor\frac{N+1}{2}\rfloor}{4}-\frac{1}{4}\Bigr\rfloor\approx \frac{Nn}{4}+\frac{n^2}{8}-\frac{N}{8}+\frac{n}{8} & if\ N\ge n.
       \end{cases}
    \]
\end{theorem}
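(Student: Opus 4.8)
The plan is to push everything down to the degree sequence and feed it into Haslegrave's bound (Theorem~\ref{lb}). Writing $\delta_1\le\cdots\le\delta_n$ for the sorted degrees and using $S(G)=N$, Theorem~\ref{lb} yields the family of inequalities $\delta_k+\delta_{k+1}\le N+k$ for every $1\le k\le n-1$. I would also record three elementary caps: $\delta_n=\Delta(G)\le\chi'(G)\le S(G)=N$ (so every $\delta_i\le N$), the simple-graph bound $\delta_i\le n-1$, and, from the $k=1$ instance $2\delta_1\le\delta_1+\delta_2\le N+1$, the bound $\delta_1\le\lfloor\frac{N+1}{2}\rfloor$. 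Since $2\lvert E(G)\rvert=\sum_{i=1}^n\delta_i$, the whole problem becomes: maximize $\sum_i\delta_i$ subject to these constraints. The key structural observation is that the effective cap $\min\{N,\,n-1\}$ on each degree changes behaviour at the threshold $N=n$, and this is exactly what produces the two cases of the theorem.

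For the regime $N\le n-1$ (so the governing cap is $\delta_i\le N$), I would group the degrees into consecutive pairs $(\delta_{2j-1},\delta_{2j})$ and bound each pair by
\[
\delta_{2j-1}+\delta_{2j}\le\min\{N+(2j-1),\,2N\},
\]
using the Haslegrave inequality with $k=2j-1$ together with the cap $\delta_i\le N$. The cap $2N$ starts to bind precisely once $2j-1>N$, i.e. for $j>\frac{N+1}{2}$; summing the arithmetic progression $N+(2j-1)$ up to the cutoff $j=\lfloor\frac{N+1}{2}\rfloor$ and the constant $2N$ beyond it produces the quadratic saving $-\frac{N^2}{8}$ together with the term $\frac{\lfloor(N+1)/2\rfloor}{4}$. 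When $n$ is odd one vertex is left unpaired, and I would absorb it using $\delta_1\le\lfloor\frac{N+1}{2}\rfloor$; the outer floor then accounts for $\lvert E(G)\rvert$ being an integer.

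For the regime $N\ge n$ (so the governing cap is now $\delta_i\le n-1$), pairing throws away the stronger information, so instead I would sum all $n-1$ consecutive inequalities at once. The left-hand side telescopes,
\[
\sum_{k=1}^{n-1}(\delta_k+\delta_{k+1})=2\sum_{i=1}^n\delta_i-\delta_1-\delta_n,
\]
while the right-hand side is $\sum_{k=1}^{n-1}(N+k)=(n-1)N+\binom{n}{2}$. Rearranging gives $2\sum_i\delta_i\le(n-1)N+\binom{n}{2}+\delta_1+\delta_n$, and I would finish by inserting $\delta_n\le n-1$ and $\delta_1\le\lfloor\frac{N+1}{2}\rfloor$ and dividing by $4$, which reproduces exactly the claimed expression before the floor.

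I expect the genuine conceptual step to be recognizing that the right strategy switches at $N\approx n$: when $N\le n-1$ the per-vertex cap $N$ is what limits density and the pairing extracts the $-\frac{N^2}{8}$ gain, whereas when $N\ge n$ the cap $n-1$ dominates and the telescoping sum (which retains $\delta_n\le n-1$) is the natural move. Everything after that is routine but delicate arithmetic: matching the lower-order constants, carrying the $\lfloor(N+1)/2\rfloor$ term correctly through both the even and odd cases of $n$, and checking that the two expressions agree to leading order $\tfrac{3n^2}{8}$ at the crossover $N\approx n$, which is a useful consistency test on the bookkeeping.
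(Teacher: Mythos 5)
Your proposal is correct and follows essentially the same route as the paper: both cases rest on Haslegrave's inequalities $\delta_k+\delta_{k+1}-k\le N$ combined with the caps $\delta_i\le N$, $\delta_n\le n-1$, and $\delta_1\le\lfloor\frac{N+1}{2}\rfloor$, and your treatment of the regime $N\ge n$ (telescoping all $n-1$ inequalities) is literally the paper's argument. The only difference is in the regime $N\le n-1$, where the paper telescopes the first $N-1$ inequalities to bound $\delta_1+\cdots+\delta_N$ and then caps the remaining degrees by $N$, whereas you pair consecutive degrees and take $\min\{N+(2j-1),\,2N\}$ per pair; I checked that both aggregations of the same constraint set yield the identical closed form, including the $\lfloor\frac{N+1}{2}\rfloor/4$ term in the odd-$n$ sub-case.
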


\begin{proof}
    Assume the $n$ vertices have degrees $\delta_1\le \delta_2\le ...\le \delta_n$. We know $\delta_i\le N$ for any $i\in[1,\ n]$ because otherwise $S(G)\ge \Delta(G)>N$, a contradiction.
    
    First assume $N\le n-1$. By Theorem \ref{lb}, we have
    \[\begin{cases}
        \delta_1+\delta_2-1\le N, \\
        \delta_2+\delta_3-2\le N, \\
        ... \\
        \delta_{N-1}+\delta_N-(N-1)\le N.
    \end{cases}
    \]
    We also have $\delta_1\le \lfloor\frac{N+1}{2}\rfloor$ and $\delta_N\le N$. Adding up these inequalities, we have
    \begin{align*}
        2(\delta_1+\delta_2+...+\delta_N)-\frac{N(N-1)}{2}\le N(N-1)+\Bigl\lfloor\frac{N+1}{2}\Bigr\rfloor+N,
    \end{align*}
    which, together with $\delta_{N+1}\le \delta_{N+2}\le ...\le \delta_n\le N$, implies
    \begin{align*}
        |E(G)|=\frac{\delta_1+\delta_2+...+\delta_n}{2}\le \biggl\lfloor \frac{Nn}{2}-\frac{N^2}{8}-\frac{N}{8}+\frac{\lfloor\frac{N+1}{2}\rfloor}{4}\biggr\rfloor.
    \end{align*}

    Next, we assume $N\ge n$. Similarly, we have
    \[\begin{cases}
        \delta_1+\delta_2-1\le N, \\
        \delta_2+\delta_3-2\le N, \\
        ... \\
        \delta_{n-1}+\delta_n-(n-1)\le N.
    \end{cases}
    \]
    We also have $\delta_1\le \lfloor\frac{N+1}{2}\rfloor$ and $\delta_n\le n-1$. Adding up these inequalities, we have
    \begin{align*}
        2(\delta_1+\delta_2+...+\delta_n)-\frac{n(n-1)}{2}\le N(n-1)+\Bigl\lfloor\frac{N+1}{2}\Bigr\rfloor+(n-1),
    \end{align*}
    which implies
    \begin{align*}
        |E(G)|=\frac{\delta_1+\delta_2+...+\delta_n}{2}\le \biggl\lfloor \frac{Nn}{4}+\frac{n^2}{8}-\frac{N}{4}+\frac{n}{8}+\frac{\lfloor\frac{N+1}{2}\rfloor}{4}-\frac{1}{4}\biggr\rfloor.
    \end{align*}
\end{proof}

Given $S(G)=N$, we can see that, if $n$ is small, then the bound given in Proposition \ref{turanbound} by Tur\'an's theorem is better; if $n$ is large, in particular if $n$ goes to infinity, then the bound given in Theorem \ref{ubeg} is better.

Finally, given $S(G)=N$ and assuming the number of vertices in $G$ is not very small, we prove a lower bound on the maximum possible number of edges in $G$.

\begin{theorem} \label{lbeg}
    Let $n$ and $N$ be two positive integers with $2n-3\ge N$. Let $\mathcal{F}_{n,\ N}$ denote the family of graphs with $n$ vertices and sum index $N$. Then 
    \begin{align*}
        \max\{|E(G)|:\ G\in \mathcal{F}_{n,\ N}\}\ge \frac{Nn}{2}-\frac{N^2}{8}-\frac{N}{4}+\epsilon,
    \end{align*}
    with $\epsilon\ge -\frac{1}{8}$ depending on $N$ and $n$.
\end{theorem}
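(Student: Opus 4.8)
The plan is to produce an explicit member of $\mathcal{F}_{n,\ N}$ with the asserted number of edges. Label the $n$ vertices by the identity map $f(i)=i$ for $1\le i\le n$, and let $S\subseteq \Z$ be the window of $N$ consecutive integers centered at $n+1$ (symmetric when $N$ is odd, and one of the two nearly symmetric windows $[n+1-\tfrac N2,\ n+\tfrac N2]$ when $N$ is even). Define $G$ on these vertices by joining $i$ and $j$ exactly when $i+j\in S$. Since $[3,\ 2n-1]$ has $2n-3$ elements and $N\le 2n-3$ by hypothesis, the window $S$ lies inside $[3,\ 2n-1]$; hence every $s\in S$ is realized as a rank sum, so under $f$ the graph $G$ has exactly $N$ distinct rank sums and $S(G)\le N$.

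For the matching lower bound $S(G)\ge N$ I would invoke Haslegrave's inequality (Theorem \ref{lb}). The point is to locate the two smallest degrees, which occur at the extreme vertices. Vertex $1$ is joined to those $j$ with $1+j\in S$, i.e.\ $j$ lies in a length-$N$ window whose upper portion exceeds $n$ and is therefore discarded, leaving degree $\lceil N/2\rceil+O(1)$; the same holds symmetrically at vertex $n$. A direct computation of the degree of vertex $i$ as
\begin{align*}
    \deg(i)=\bigl|\{s\in S:\ 1\le s-i\le n,\ s\neq 2i\}\bigr|
\end{align*}
shows the degree sequence rises from the ends toward a central plateau, so that the two minimal degrees are $\delta_1(G)+\delta_2(G)=N+1$, whence $\delta_1(G)+\delta_2(G)-1=N$. (The self-loop correction $s=2i$ affects only interior vertices, not the extremes, across the whole range $N\le 2n-3$.) Theorem \ref{lb} then gives $S(G)\ge N$, so $S(G)=N$ and $G\in\mathcal{F}_{n,\ N}$.

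It remains to count $|E(G)|=\sum_{s\in S}r(s)$, where $r(s)$ is the number of pairs $1\le i<j\le n$ with $i+j=s$. Here $r(s)=\lceil s/2\rceil-1$ for $3\le s\le n+1$ and $r(s)=n-\lfloor s/2\rfloor$ for $n+1\le s\le 2n-1$, and $r$ is symmetric and unimodal about $s=n+1$. I would pair the terms $r(n+1-t)$ and $r(n+1+t)$ about the peak; each pair equals $n-1-t$ up to a correction in $\{0,1\}$ governed by the parity of $n+1-t$. Summing the main terms $\sum_t(n-1-t)$ yields the leading quantity $\tfrac{Nn}{2}-\tfrac{N^2}{8}$, while the accumulated parity corrections (roughly half of the $\approx N/2$ pairs contribute $+1$) produce the $-\tfrac N4$ term, giving $|E(G)|=\tfrac{Nn}{2}-\tfrac{N^2}{8}-\tfrac N4+\epsilon$.

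The main obstacle is the bookkeeping in this last step: the exact residue $\epsilon$ depends on the parities of both $n$ and $N$, so one must split into the four parity cases, track the floor and ceiling corrections in each, and confirm that $\epsilon$ never falls below $-\tfrac18$. The degree computation supporting the Haslegrave step is parity-sensitive in the same way and has to be verified in the same cases, but it is routine once the window $S$ is fixed.
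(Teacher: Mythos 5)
Your construction and edge count coincide with the paper's: the same labeling $f(i)=i$, the same window of $N$ consecutive rank sums centered at $n+1$, and the same tally $\sum_s r(s)=\frac{Nn}{2}-\frac{N^2}{8}-\frac{N}{4}+\epsilon$. The gap is in how you certify that your graph actually lies in $\mathcal{F}_{n,\,N}$, i.e.\ that $S(G)=N$ rather than merely $S(G)\le N$. You claim that the two smallest degrees satisfy $\delta_1(G)+\delta_2(G)=N+1$, so that Haslegrave's bound yields $S(G)\ge N$. This identity fails. Take $N=2n-4$: the window is $[3,\,2n-1]$ with one endpoint removed, so $G$ is $K_n$ minus a single edge, which has \emph{two} vertices of degree $n-2$ (the endpoints of the deleted edge) and all others of degree $n-1$. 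Then $\delta_1+\delta_2-1=2n-5=N-1$, and indeed $\max_k(\delta_k+\delta_{k+1}-k)=2n-5$ for every $k$, so Theorem \ref{lb} certifies only $S(G)\ge N-1$. The culprit is not the self-loop correction you dismiss but the near-extreme vertices: when $N$ is close to $2n-3$, vertex $2$ (or $n-1$) can tie the extreme vertex's degree, and the two global minima no longer sum to $N+1$. So your proof, as written, does not establish membership in $\mathcal{F}_{n,\,N}$ for all $N$ in the stated range.

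The paper avoids this entirely by never proving $S(G)\ge N$ for the constructed graph. It observes that adding one edge increases the sum index by at most one (any labeling of $G$ is a labeling of $G+e$ with at most one new rank sum), and that $S(K_n)=2n-3\ge N$. Starting from $G$ with $S(G)\le N$ and adding edges one at a time toward $K_n$, the sum index must at some point equal $N$ exactly; the graph at that moment has at least $|E(G)|$ edges and lies in $\mathcal{F}_{n,\,N}$. Replacing your Haslegrave step with this monotonicity argument repairs the proof with no case analysis at all; alternatively, you would need a genuinely different lower-bound tool (Haslegrave's inequality is not sharp enough here) to push your direct approach through for $N$ near $2n-3$.
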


\begin{proof}
    It is sufficient to construct a graph $G$ with $n$ vertices, $\frac{Nn}{2}-\frac{N^2}{8}-\frac{N}{4} + \epsilon$ edges, and $S(G)\le N$. This is because we can then add edges to $G$, and each time we add an edge, $S(G)$ will be increased by at most one, so eventually we will get a graph $G'$ with $n$ vertices, at least $\frac{Nn}{2}-\frac{N^2}{8}-\frac{N}{4} + \epsilon$ edges, and $S(G')=N$.

    We construct such a graph $G$ and a rank assignment in the following way:
    \begin{itemize}
        \item Draw $n$ vertices, and give them ranks $1,\ 2,\ 3,\ ...,\ n$. We will call a vertex $v_i$ if its rank is $i$. In each of the following steps, we will construct some edges with a particular rank sum. The first edge rank sum is taken to be $n+1$, and the rest are taken to be $n+1+1$, $n+1-1$, $n+1+2$, $n+1-2$, $n+1+3$, $n+1-3$ and so on, until we have $N$ edge rank sums in total. We know that $N$ edge rank sums are achievable because $N\le 2n-3$.
        \item Connect $v_1$ with $v_n$; $v_2$ with $v_{n-1}$; $v_3$ with $v_{n-2}$... We get $\bigl\lfloor\frac{n}{2}\bigr\rfloor$ edges each with rank sum $n+1$.
        \item Connect $v_2$ with $v_n$; $v_3$ with $v_{n-1}$; $v_4$ with $v_{n-2}$... We get $\bigl\lfloor\frac{n-1}{2}\bigr\rfloor$ edges each with rank sum $n+2=n+1+1$.
        \item Connect $v_1$ with $v_{n-1}$; $v_2$ with $v_{n-2}$; $v_3$ with $v_{n-3}$... We get $\bigl\lfloor\frac{n-1}{2}\bigr\rfloor$ edges each with rank sum $n=n+1-1$.
        \item ...
    \end{itemize}
    After these steps, we get a graph with $n$ vertices, and the number of edges in this graph is
    \begin{align*}
        \underbrace{\Bigl\lfloor\frac{n}{2}\Bigr\rfloor+\Bigl\lfloor\frac{n-1}{2}\Bigr\rfloor+\Bigl\lfloor\frac{n-1}{2}\Bigr\rfloor+\Bigl\lfloor\frac{n-2}{2}\Bigr\rfloor+\Bigl\lfloor\frac{n-2}{2}\Bigr\rfloor+...+\Bigl\lfloor\frac{n-\lfloor\frac{N}{2}\rfloor}{2}\Bigr\rfloor}_{N\ summands},
    \end{align*}
    which is equal to
    \[ \begin{cases} 
          \frac{Nn}{2}-\frac{N^2}{8}-\frac{N}{4} & if\ N\ is\ even, \\
          \frac{Nn}{2}-\frac{N^2}{8}-\frac{N}{4}+\frac{3}{8} & if\ n\ is\ even\ and\ N\equiv 1\ (mod\ 4),\ or\ n\ is\ odd\ and\ N\equiv 3\ (mod\ 4),\\
          \frac{Nn}{2}-\frac{N^2}{8}-\frac{N}{4}-\frac{1}{8} & if\ n\ is\ even\ and\ N\equiv 3\ (mod\ 4),\ or\ n\ is\ odd\ and\ N\equiv 1\ (mod\ 4).
       \end{cases}
    \]
    
    Under current rank assignment, we have $N$ different edge rank sums in this graph, so its sum index is at most $N$. As explained in the beginning, this suffices to finish the proof. 
\end{proof}

Note that when $N=3$, the construction in this proof is essentially the same as the ladder construction in Theorem \ref{N3}.

Combining Theorem \ref{ubeg} and Theorem \ref{lbeg}, we have that, if we fix $N\ge 4$ and let $\max\{|E(G)|:\ G\in \mathcal{F}_{n,\ N}\}$ be a function about $n$, then 
\begin{align*}
    \max\{|E(G)|:\ G\in \mathcal{F}_{n,\ N}\}+\frac{N^2}{8}\sim \frac{Nn}{2}.
\end{align*}

But there is an extra term $-\frac{N}{4}$ in the lower bound in Theorem \ref{lbeg}, so some new ideas are needed for the exact values of $\max\{|E(G)|:\ G\in \mathcal{F}_{n,\ N}\}$ for $N\ge 4$.

\section{Remarks}

In fact, for a graph $G$, we can generalize the sum index of $G$ to the sum index of $G$ in a general abelian group $A$, by assigning distinct elements in $A$ to the vertices in $G$, and taking the sum of two elements if they are on a pair of adjacent vertices. Under this generalized definition, the results in additive combinatorics will become more powerful tools.

To help readers gain a better understanding, we present a very specific example here. If the abelian group is taken to be $\Z_p^2$ with $p\ge 5$ being a prime number, and the graph $G$ is taken to be $K_{2p+1}$, the complete graph with $2p+1$ vertices, then from a graph theory perspective, it will be hard to give a lower bound on $S_{\Z_p^2}(K_{2p+1})$, the sum index of $K_{2p+1}$ in $\Z_p^2$. However, by the following new result, we know $S_{\Z_p^2}(K_{2p+1})\ge 4p$.

\begin{lemma}[Terkel \cite{Te}]
    Let $p\ge 5$ be a prime number, and let $X\subseteq \Z_p^2$ be a subset with $2p+1$ elements. Then
    \begin{align*}
        |X\hat +X|\ge 4p.
    \end{align*}
\end{lemma}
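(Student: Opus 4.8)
The plan is to prove the restricted sumset bound $|X \hat + X| \ge 4p$ for a $(2p+1)$-element set $X \subseteq \Z_p^2$ by reducing to the structure of $X$ via its intersections with cosets of one-dimensional subgroups (lines). The key observation is that $\Z_p^2$ is a union of $p+1$ disjoint "pencils" of parallel lines, and the size of $X \hat + X$ can be controlled by understanding how $X$ distributes across such lines. The main additive-combinatorial input should be the Erd\H{o}s--Heilbronn bound from Theorem \ref{restricted}, applied one line at a time inside $\Z_p \cong \F_p$, since each line is (after choosing an affine coordinate) a copy of $\F_p$.

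\medskip

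\textbf{Step 1: Pigeonhole on a line.} Since $|X| = 2p+1$, for any direction (pencil) the set $X$ is partitioned among $p$ parallel lines, so by pigeonhole some line $\ell$ contains at least $\lceil (2p+1)/p \rceil = 3$ points of $X$. More carefully, I would try to find a direction such that a single line contains many points — ideally, I want to extract a large one-dimensional piece $X \cap \ell$ on which Theorem \ref{restricted} gives a long restricted sumset. If $|X \cap \ell| = k$, then $(X \cap \ell) \hat + (X \cap \ell)$ lives inside the line $\ell + \ell$ (a fixed coset of the same subgroup) and, identifying that line with $\F_p$, has size at least $\min\{p,\ 2k-3\}$.

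\medskip

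\textbf{Step 2: Combine contributions from parallel lines.} Fix a direction and write $X = \bigsqcup_{i} (X \cap \ell_i)$ over the $p$ parallel lines $\ell_i$ in that pencil, with $|X \cap \ell_i| = k_i$ and $\sum_i k_i = 2p+1$. Each pairwise restricted sum $(X \cap \ell_i) \hat + (X \cap \ell_j)$ lands in the line $\ell_i + \ell_j$, and distinct unordered pairs $\{i,j\}$ (together with the diagonal $i=j$) hit \emph{different} target lines precisely when the sums $\ell_i + \ell_j$ are distinct — which, because addition of cosets in the quotient $\Z_p^2 / (\text{subgroup})\cong \Z_p$ behaves like addition in $\F_p$, lets me separate the contributions and add them up without overlap. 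On the diagonal line $2\ell_i$ I get $\min\{p, 2k_i - 3\}$ from Theorem \ref{restricted}; on an off-diagonal line I get at least $k_i + k_j - 1$ (ordinary sumset minus the Cauchy--Davenport deficiency, valid in $\F_p$). Summing these over the appropriate index sets and optimizing the resulting expression in the $k_i$ subject to $\sum k_i = 2p+1$ should yield the bound $4p$.

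\medskip

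\textbf{The hard part} will be handling the regime where $X$ is spread very evenly (every line in every pencil contains exactly two or three points), since then no single line gives a long sumset and one must genuinely exploit the interaction \emph{across} lines and \emph{across} different pencils — this is where the extremal configurations live and where the constant $4p$ (rather than something smaller) is forced. I expect the clean separation of Step 2 to break down in the evenly-distributed case because the diagonal lines $2\ell_i$ may coincide in a way that causes double-counting, so I would need a second argument: either a direct Cauchy--Davenport--type analysis in $\Z_p^2$ treating $X \hat + X$ as a whole, or a careful case split isolating the few near-extremal $X$ (likely arithmetic-progression-like or subgroup-coset configurations) and verifying the bound for them by hand. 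Since the lemma is attributed to Terkel \cite{Te}, I anticipate the final argument routes through a two-dimensional Erd\H{o}s--Heilbronn estimate whose proof uses the polynomial method (the Combinatorial Nullstellensatz of Alon, as in the proof of the generalized Erd\H{o}s--Heilbronn result of Alon, Nathanson, and Ruzsa \cite{ANR}), applied to a suitably chosen polynomial vanishing on the diagonal of $X \times X$.
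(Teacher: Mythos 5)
The paper does not prove this lemma; it is quoted as an external result from Terkel's preprint \cite{Te} and used as a black box, so there is no internal proof to compare against. Judged on its own terms, your submission is a proof plan rather than a proof, and you concede as much: the ``hard part'' (the evenly distributed case) is explicitly deferred to ``a second argument'' that is never supplied. Beyond that, Step 2 has a structural problem you only partially acknowledge: for a fixed pencil there are only $p$ target lines but up to $\binom{r}{2}+r$ pairs $\{i,j\}$, so many pairs collide on the same target line, and on a shared target line you may only take the \emph{maximum} of the fiberwise lower bounds, not their sum. The claim that distinct pairs ``hit different target lines precisely when the sums $\ell_i+\ell_j$ are distinct'' is true but vacuous as a separation device, since in general they are not distinct.

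More importantly, the quantitative target is out of reach of the method as described. Note that $4p=2|X|-2$, i.e.\ the lemma is \emph{one more} than the generic Erd\H{o}s--Heilbronn prediction $2|X|-3=4p-1$, and an aggregation of Cauchy--Davenport bounds ($\min\{p,\ k_i+k_j-1\}$ off-diagonal) and Erd\H{o}s--Heilbronn bounds ($\min\{p,\ 2k_i-3\}$ on-diagonal) bottoms out at $4p-1$. Concretely: if $X$ meets every line of some pencil, with one line containing $3$ points and the remaining $p-1$ lines containing $2$ points each, then every one of the $p$ target lines is guaranteed at least $\max\{3+2-1,\ 2\cdot 3-3\}=4$ sums except the one forced to use the diagonal pair on the $3$-point line, which is only guaranteed $2\cdot 3-3=3$; the method certifies only $4(p-1)+3=4p-1$. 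The entire content of the lemma --- the reason it is a standalone result about the specific cardinality $2p+1$ --- lives in that final $+1$, which requires a genuinely two-dimensional structural argument (or the polynomial method applied globally in $\Z_p^2$, which is unavailable there in the usual form since $\Z_p^2$ is not a field) that your outline does not contain.
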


If we assign the $2p+1$ elements in $\{(0,\ 0)\}\cup \{(1,\ i):\ 0\le i\le p-1\}\cup \{(\frac{p+1}{2},\ j):\ 0\le j\le p-1\}$ to the $2p+1$ vertices, then we will get $4p$ different edge rank sums, which are $(k,\ \ell)$ for $k\in \{1,\ 2,\ \frac{p+1}{2},\ \frac{p+1}{2}+1\}$ and $0\le \ell\le p-1$. So we actually have $S_{\Z_p^2}(K_{2p+1})=4p$.

For future goals, we would like to suggest the following problems.

\begin{problem}
    Determine the exact sum indices of $nK_t$ for $t\ge 5$.
\end{problem}
    
\begin{problem}
    Determine the exact values of $\max\{|E(G)|:\ G\in \mathcal{F}_{n,\ N}\}$ for $N\ge 4$.
\end{problem}

Also, we make the following conjecture.

\begin{conjecture}
    We have $\max\{|E(G)|:\ G\in \mathcal{F}_{n,\ N}\}$ attained by the construction given in Theorem \ref{lbeg}.
\end{conjecture}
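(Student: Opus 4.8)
The plan is to recast the problem as a purely additive-combinatorial extremal question, and then to show that arithmetic progressions are the extremizers. First I would observe that, as in the proof of Theorem \ref{lbeg}, for any target we may take $G$ to be \emph{saturated}: given an injective labeling $f$ and a set $S$ of allowed rank sums, the densest realizable graph simply keeps every pair $\{u,\ v\}$ with $f(u)+f(v)\in S$. Hence
\begin{align*}
    \max\{|E(G)|:\ G\in \mathcal{F}_{n,\ N}\}=\max_{\substack{A\subseteq \Z,\ |A|=n \\ S\subseteq \Z,\ |S|=N}}\ \sum_{s\in S} r_A(s),
\end{align*}
where $r_A(s):=|\{\{a,\ a'\}:\ a,\ a'\in A,\ a\neq a',\ a+a'=s\}|$ counts the unordered representations of $s$ as a sum of two distinct elements of $A$. (The sum index equals $N$ exactly after the padding argument already used in Theorem \ref{lbeg}, and the upper bound is immediate since any $G$ of sum index $N$ has all its edge sums inside some $N$-element $S$.) The construction in Theorem \ref{lbeg} is precisely the choice $A=\{1,\ 2,\ ...,\ n\}$ together with the central interval $S=\{s:\ |s-(n+1)|\ \text{small}\}$ of size $N$, so the conjecture is equivalent to the statement that this pair $(A,\ S)$ maximizes $\sum_{s\in S} r_A(s)$.

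Next I would fix $A$ and optimize over $S$: the best $S$ consists of the $N$ values of $s$ with the largest $r_A(s)$. Thus it suffices to prove that, over all $n$-element sets $A$, the sum of the $N$ largest values of $r_A$ is maximized by the arithmetic progression $\{1,\ ...,\ n\}$, whose representation function has the triangular profile $\lfloor n/2\rfloor,\ \lfloor (n-1)/2\rfloor,\ \lfloor (n-1)/2\rfloor,\ \lfloor (n-2)/2\rfloor,\ ...$ appearing in Theorem \ref{lbeg}. I would prove the stronger claim that the sorted-descending sequence $(r_{\mathrm{AP}}(s))_s$ \emph{majorizes} $(r_A(s))_s$ for every $A$: since both sequences sum to $\binom{n}{2}$, majorization forces every partial sum of the top values, in particular the top $N$, to be largest for the progression. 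Two ingredients drive this. Each $r_A(s)$ is at most $\lfloor n/2\rfloor$ (a single sum-level is a matching), and at most one value of $s$ can attain a perfect matching, since a finite set symmetric about two distinct centers would be invariant under a nonzero translation; more generally the number of $s$ with $r_A(s)$ large is tightly constrained, which is exactly what pins the profile to a triangle. The clean way to package this is through truncated additive energy: majorization is equivalent to $\sum_s (r_A(s)-t)_+\le \sum_s (r_{\mathrm{AP}}(s)-t)_+$ for every threshold $t$, and I would try to establish these inequalities by a compression (straightening) argument that deforms an arbitrary $A$ toward an arithmetic progression without decreasing any truncated energy.

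The main obstacle is exactly this last step: proving that arithmetic progressions simultaneously maximize \emph{all} truncated additive energies $\sum_s (r_A(s)-t)_+$. While it is classical that progressions maximize the untruncated additive energy $\sum_s r_A(s)^2$ among $n$-element integer sets, controlling every truncation at once is more delicate, because a set with strong additive structure could in principle concentrate its representation mass differently; ruling this out, and identifying the progression as the unique optimum, is the heart of the matter and is presumably why the statement remains a conjecture. A complementary route, likely more tractable for the matching upper bound, is to sharpen Theorem \ref{ubeg}: the bound there loses roughly $\frac{N}{4}$ because it feeds only the degree sequence into Haslegrave's inequality (Theorem \ref{lb}) and then sums consecutive pairs wastefully. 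Incorporating the nested structure of the sum-level matchings—each level pairs elements symmetrically about its own center, so the levels interact in a laminar fashion—should recover the missing $\frac{N}{4}$ and, combined with Theorem \ref{lbeg}, close the gap. Either way, the decisive difficulty is the exact extremal characterization of the progression, together with the careful bookkeeping of floors needed to reproduce the per-residue discrepancy terms ($+\tfrac38$, $-\tfrac18$) appearing in Theorem \ref{lbeg}.
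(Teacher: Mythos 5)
You should first note that the statement you are trying to prove is stated in the paper as a \emph{conjecture}: the authors give no proof, so there is no argument of theirs to compare against, and any complete proof here would be new. Your reduction is sound and worth keeping: since the sum index is monotone under adding edges and increases by at most one per edge, the padding argument from Theorem \ref{lbeg} does show
\begin{align*}
    \max\{|E(G)|:\ G\in \mathcal{F}_{n,\ N}\}=\max_{\substack{A\subseteq \Z,\ |A|=n \\ S\subseteq \Z,\ |S|=N}}\ \sum_{s\in S} r_A(s),
\end{align*}
and your observation that the conjecture, ranging over all admissible $N$, is exactly the statement that the sorted representation profile of $\{1,\ 2,\ ...,\ n\}$ majorizes that of every $n$-element set $A$ (both profiles summing to $\binom{n}{2}$, with at least $2n-3$ nonzero entries by Theorem \ref{complete}) is a correct and clarifying reformulation. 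Your top-one and top-two verifications ($r_A(s)\le \lfloor n/2\rfloor$, and the two-center symmetry argument showing two sum-levels cannot both be perfect matchings) are also correct.

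The genuine gap is the one you name yourself: the majorization claim, equivalently the family of truncated-energy inequalities $\sum_s (r_A(s)-t)_+\le \sum_s (r_{\mathrm{AP}}(s)-t)_+$ for all $t$, is asserted but not proved, and it does not follow from the tools you cite. The classical fact that progressions maximize the untruncated energy $\sum_s r_A(s)^2$ (via discrete Riesz/Brascamp--Lieb--Luttinger rearrangement) extends to the moments $\sum_s r_A(s)^k$, but domination of all moments does not imply domination of all truncated sums, and $(x-t)_+$ is precisely the extreme ray of convex functionals that the multilinear rearrangement machinery does not reach; a ``compression argument that deforms $A$ toward a progression without decreasing any truncated energy'' is exactly the missing theorem, not a routine step, since standard compressions can decrease a top-$N$ partial sum for intermediate $N$ even while preserving the total. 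Your alternative route, sharpening Theorem \ref{ubeg} to recover the $\frac{N}{4}$ term, faces the same obstruction in dual form: Haslegrave's bound (Theorem \ref{lb}) sees only the degree sequence, and the laminar interaction of sum-level matchings you invoke is precisely the structural information no current lemma extracts. So what you have is a correct and valuable reformulation of the conjecture as an extremal problem about representation functions, together with boundary cases, but not a proof; the statement remains open at exactly the point you flagged.
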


\section*{Acknowledgments}
We thank David J. Grynkiewicz for carefully reading the manuscript and giving valuable suggestions.


\begin{thebibliography}{10}

\bibitem{ANR}
N.~Alon, M.~B. Nathanson, and I.~Ruzsa.
\newblock Adding distinct congruence classes modulo a prime.
\newblock {\em Amer. Math. Monthly}, 102(3):250--255, 1995.

\bibitem{BS}
A.~Balog and E.~Szemer\'edi.
\newblock A statistical theorem of set addition.
\newblock {\em Combinatorica}, 14(3):263--268, 1994.

\bibitem{DDSH}
J.~A. Dias~da Silva and Y.~O. Hamidoune.
\newblock Cyclic spaces for {G}rassmann derivatives and additive theory.
\newblock {\em Bull. London Math. Soc.}, 26(2):140--146, 1994.

\bibitem{EG}
P.~Erdős and R.~L. Graham.
\newblock {\em Old and New Problems and Results in Combinatorial Number Theory}.
\newblock L’Enseignement Mathématique, Geneva, 1980.

\bibitem{Go}
W.~T. Gowers.
\newblock A new proof of {S}zemer\'edi's theorem for arithmetic progressions of length four.
\newblock {\em Geom. Funct. Anal.}, 8(3):529--551, 1998.

\bibitem{Gr}
D.~J. Grynkiewicz.
\newblock {\em Structural additive theory}, volume~30 of {\em Developments in Mathematics}.
\newblock Springer, Cham, 2013.

\bibitem{HHKRW}
J.~Harrington, E.~Henninger-Voss, K.~Karhadkar, E.~Robinson, and T.~W.~H. Wong.
\newblock Sum index and difference index of graphs.
\newblock {\em Discrete Appl. Math.}, 325:262--283, 2023.

\bibitem{HW}
J.~Harrington and T.~W.~H. Wong.
\newblock On super totient numbers and super totient labelings of graphs.
\newblock {\em Discrete Math.}, 343(2):111670, 11, 2020.

\bibitem{Has}
J.~Haslegrave.
\newblock Sum index, difference index and exclusive sum number of graphs.
\newblock {\em Graphs Combin.}, 39(2):Paper No. 32, 12, 2023.

\bibitem{KS}
T.~Kittipassorn and T.~Sumalroj.
\newblock Multithreshold multipartite graphs with small parts.
\newblock {\em Discrete Math.}, 347(7):Paper No. 113979, 15, 2024.

\bibitem{Kn1}
M.~Kneser.
\newblock Absch\"atzung der asymptotischen {D}ichte von {S}ummenmengen.
\newblock {\em Math. Z.}, 58:459--484, 1953.

\bibitem{Kn2}
M.~Kneser.
\newblock Ein {S}atz \"uber abelsche {G}ruppen mit {A}nwendungen auf die {G}eometrie der {Z}ahlen.
\newblock {\em Math. Z.}, 61:429--434, 1955.

\bibitem{TV}
T.~Tao and V.~Vu.
\newblock {\em Additive combinatorics}, volume 105 of {\em Cambridge Studies in Advanced Mathematics}.
\newblock Cambridge University Press, Cambridge, 2006.

\bibitem{Te}
J.~Terkel.
\newblock Restricted sums of sets of cardinality $2p+1$ in $\mathbb{Z}_p^2$.
\newblock {\em arXiv preprint arXiv:2410.00143}, 2024.

\bibitem{Tu}
P.~Tur\'an.
\newblock Eine {E}xtremalaufgabe aus der {G}raphentheorie.
\newblock {\em Mat. Fiz. Lapok}, 48:436--452, 1941.

\bibitem{Vi}
V.~G. Vizing.
\newblock On an estimate of the chromatic class of a {$p$}-graph.
\newblock {\em Diskret. Analiz}, (3):25--30, 1964.

\bibitem{Wa}
R.~Wang.
\newblock Threshold numbers of some graphs.
\newblock {\em arXiv preprint arXiv:2406.12063}, 2024.

\bibitem{ZW}
Y.~Zhang and H.~Wang.
\newblock Some new results on sum index and difference index.
\newblock {\em AIMS Math.}, 8(11):26444--26458, 2023.

\bibitem{Zh}
Y.~Zhao.
\newblock {\em Graph theory and additive combinatorics---exploring structure and randomness}.
\newblock Cambridge University Press, Cambridge, 2023.

\end{thebibliography}
\end{document}